\newcommand{\R}{\mathbb R}
\newcommand{\Z}{\mathbb Z}
\let\st\relax
\DeclareMathOperator{\st}{s.t.}
\DeclareMathOperator{\conv}{conv}
\DeclareMathOperator{\poly}{poly}
\newcommand{\problems}{\mathscr P}
\newcommand{\solutions}{\mathscr S}
\newcommand{\ie}{i.e., }
\newcommand{\eg}{e.g.,~}
\newcommand{\C}{\mathcal C}
\newcommand{\D}{\mathcal D}
\renewcommand{\P}{\mathcal P}
\newcommand{\T}{\mathcal T}
\newcommand{\Qpoly}{\mathcal Q}
\newcommand{\ceil}[1]{\lceil#1\rceil}
\newcommand{\floor}[1]{\lfloor#1\rfloor}
\newcommand{\notemi}[1]{}
\newcommand{\abs}[1]{\left\lvert#1\right\rvert}
\newcommand{\pare}[1]{\left(#1\right)}
\newenvironment{cpf}
{\begin{trivlist} \item[] {\hspace{.5cm} \em Proof of claim. }}
{$\hfill\diamond$ \end{trivlist} \medskip}
\newcounter{step}
\newenvironment{step}[1][]
{\refstepcounter{step} \smallskip \bfseries Step~\thestep. #1}
{. }
\newtheorem{theorem}{Theorem}
\newtheorem{proposition}{Proposition}
\newtheorem{lemma}{Lemma}
\newtheorem{corollary}{Corollary}
\newenvironment{cpf}
{\begin{trivlist} \item[] {\em Proof of claim. }}
{$\hfill\diamond$ \end{trivlist} \medskip}
\newcounter{step}
\newenvironment{step}[1][]
{\refstepcounter{step} \begin{trivlist} \item[] \bfseries Step~\thestep. #1}
{\end{trivlist}}
\newtheorem{claim}{Claim}
\newtheorem{observation}{Observation}
\newcommand{\kywrds}{integer quadratic programming, approximation algorithm, concave function, subdeterminants, total unimodularity, total bimodularity}
\begin{document}

\title{Subdeterminants and Concave Integer Quadratic Programming\thanks{\textbf{Funding: }This work is 
supported
by ONR grant N00014-19-1-2322. Any opinions, findings, and conclusions or recommendations expressed in this material are those of the authors and do not necessarily reflect the views of the Office of Naval Research.}}

\author{Alberto Del Pia
\thanks{Department of Industrial and Systems Engineering \& Wisconsin Institute for Discovery,
University of Wisconsin-Madison, Madison, WI, USA.
E-mail: {\tt delpia@wisc.edu}.}}

\date{\today}

\maketitle

\begin{abstract}
We consider the NP-hard problem of minimizing a separable concave quadratic function over the integral points in a polyhedron, and we denote by $\Delta$ the largest absolute value of the subdeterminants of the constraint matrix.
In this paper we give an algorithm that finds an $\epsilon$-approximate solution for this problem by solving a number of integer linear programs 
whose constraint matrices have subdeterminants bounded by $\Delta$ in absolute value.
The number of these integer linear programs is polynomial in the dimension $n$, in $\Delta$ and in $1/\epsilon$, provided that the number $k$ of variables that appear nonlinearly in the objective is fixed.
As a corollary, we obtain the first polynomial-time approximation algorithm for separable concave integer quadratic programming with $\Delta \le 2$ and $k$ fixed.
In the totally unimodular case $\Delta = 1$, we give an improved algorithm that only needs to solve a number of linear programs that is polynomial in $1/\epsilon$ and is independent of $n$, provided that $k$ is fixed.
\end{abstract}

\ifthenelse {\boolean{SIOPT}}
{
\begin{keywords}
\kywrds
\end{keywords}

\begin{AMS}
90C10, 90C20, 90C26, 90C59
\end{AMS}
}{
\emph{Key words:} \kywrds
}

\section{Introduction}
\label{sec: intro}

In this paper we consider the problem of minimizing a separable concave quadratic function over the integral points in a polyhedron. 
Formally, 
\begin{align}
\label{prob: IQP}
\tag{\ensuremath{\mathcal{IQP}}}
\begin{split}
\min &\quad \sum_{i=1}^k - q_i x_i^2 + h^\top x \\
\st & \quad Wx \le w \\
& \quad x \in \Z^n.
\end{split}
\end{align}
In this formulation, $x$ is the $n$-vector of unknowns and $k \le n$. 
The matrix $W$ and the vectors $w,q,h$ stand for the data in the problem instance.
The vector $q$ is positive, and all the data is assumed to be integral:
$W \in \Z^{m \times n}$, $w \in \Z^m$, $q \in \Z^k_{> 0}$, and $h \in \Z^n$.
Problem \eqref{prob: IQP} is NP-hard even if $k=0$ as it reduces to integer linear programming.
The concavity of the objective implies that \eqref{prob: IQP} can be solved in polynomial time for any fixed value of $n$ by enumerating the vertices of $\conv\{x \in \Z^n : Wx \le w\}$ \cite{Har89}.

A variety of important practical applications can be formulated with concave quadratic costs, including 
some aspects of VLSI chip design \cite{Wat84}, 
fixed charge problems \cite{Had64}, 
production and location problems \cite{Vai74}, 
bilinear programming \cite{Kon76a,VaiShe77},
and problems concerning economies of scale, which corresponds to the economic phenomenon of ``decreasing marginal cost'' \cite{Zwa74,RosPar86,FloVis95}.



In this paper we describe an algorithm that finds an $\epsilon$-approximate solution to 
\eqref{prob: IQP} by solving a bounded number of integer linear programs (ILPs).
In order to state our approximation result, we first give the definition of $\epsilon$-approximation.
Consider an instance of a minimization problem that has an optimal solution, say $x^*$.
Let $f(x)$ denote the objective function
and let $f_{\max}$ be the maximum value of $f(x)$ on the feasible region.
For $\epsilon \in [0,1]$, we say that a feasible point $x^\diamond$ is an \emph{$\epsilon$-approximate solution} if $f(x^*) = f_{\max}$, or if $f(x^*) < f_{\max}$ and
\begin{equation}
\label{eq: epsilon}
\frac{f(x^\diamond) - f(x^*)}{f_{\max} - f(x^*)} \le \epsilon.
\end{equation}
Note that if $f(x^*) = f_{\max}$, then we have $f(x^\diamond) = f(x^*)$ and $x^\diamond$ is an optimal solution.
In the case that the problem is infeasible or unbounded, an $\epsilon$-approximate solution is not defined, and we expect our algorithm to return an indicator that the problem is infeasible or unbounded.
If the objective function has no upper bound on the feasible region, our definition loses its value because any feasible point is an $\epsilon$-approximation for any $\epsilon > 0$.
The definition of $\epsilon$-approximation has some useful invariance properties which make it a natural choice for unstructured problems.
For instance, it is preserved under dilation and translation of the objective function, and it is insensitive to affine transformations of the objective function and of the feasible region.
Our definition of approximation has been used in earlier works, and we refer to  \cite{NemYud83,Vav92c,BelRog95,KleLauPar06} for more details.

The running time of our algorithm depends on the largest absolute value of any subdeterminant of the constraint matrix $W$ in \eqref{prob: IQP}.
As is customary, throughout this paper, we denote this value by $\Delta$.
While there has been a stream of recent studies that link $\Delta$ to the complexity of ILP (see, e.g., \cite{ArtEisGlaOerVemWei16,ArtWeiZen17,PaaSchWei19}), only few papers explored how $\Delta$ affects nonlinear problems (see Section~\ref{sec: related}).
The following is our main result.

\begin{theorem}
\label{th: delta}
For every $\epsilon \in (0,1]$ there is an algorithm that finds an $\epsilon$-approximate solution to
\eqref{prob: IQP}
by solving
\begin{align*}
\pare{3 + \left\lceil\sqrt{k \left((2n\Delta)^2 + \frac{1}{\epsilon}\right)}\right\rceil }^k
\end{align*}
ILPs of size polynomial in the size of 
\eqref{prob: IQP}. 
Moreover, each ILP has integral data,
at most $n$ variables, 
at most $m$ linear inequalities and possibly additional variable bounds,
and a constraint matrix with subdeterminants bounded by $\Delta$ in absolute value.
\end{theorem}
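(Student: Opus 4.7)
My plan is the classical piecewise-linear under-estimation scheme for concave minimization, combined with an enumeration of partition pieces chosen so that the total count of ILPs is controlled by both the subdeterminant bound $\Delta$ and the target accuracy $\epsilon$. For every $i \in [k]$ I would restrict the range of $x_i$ to an interval $[L_i, U_i]$, partition it into $N$ equal subintervals $I_{i,1}, \ldots, I_{i,N}$, and replace the concave term $-q_i x_i^2$ on $I_{i,j}$ by its affine chord $c_{i,j}(x_i)$, which is a pointwise lower bound on $-q_i x_i^2$ by concavity. For every tuple $(j_1, \ldots, j_k) \in [N]^k$ I would then solve the ILP
\begin{equation*}
\min \sum_{i=1}^k c_{i,j_i}(x_i) + h^\top x \quad \st \quad Wx \le w,\ x_i \in I_{i,j_i}\ \forall i \in [k],\ x \in \Z^n,
\end{equation*}
and return the feasible point $x^\diamond$ of \eqref{prob: IQP} among the $N^k$ ILP optima that minimizes the true objective $f$. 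Since adding the variable bounds $x_i \in I_{i,j_i}$ to the constraints of \eqref{prob: IQP} leaves the bound $\Delta$ on the absolute subdeterminants intact and preserves integrality of the data, each ILP satisfies the structural properties promised in the theorem.

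Two things remain: (i) choose $N$ so that the ILP count $N^k$ is at most the quantity in the theorem, and (ii) show that the resulting $x^\diamond$ is $\epsilon$-approximate. For (ii), letting $(j_1^*, \ldots, j_k^*)$ be the tuple whose subintervals contain a fixed optimum $x^*$ of \eqref{prob: IQP}, the standard fact that the pointwise gap between $-q_i x_i^2$ and its chord on a subinterval of length $\ell$ is at most $q_i \ell^2/4$ gives
\begin{equation*}
f(x^\diamond) - f(x^*) \ \le\ \sum_{i=1}^k q_i (U_i - L_i)^2 / (4N^2),
\end{equation*}
and this bound has to be compared to $\epsilon(f_{\max} - f(x^*))$ in the sense of \eqref{eq: epsilon}.

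The main obstacle is precisely to match this absolute error with $\epsilon(f_{\max} - f(x^*))$ using $N = 3 + \lceil \sqrt{k((2n\Delta)^2 + 1/\epsilon)}\rceil$. This is where the subdeterminant bound enters: using the fact that a concave objective attains its minimum at a vertex of $\conv\{x \in \Z^n : Wx \le w\}$ together with a Cramer-rule / proximity argument, one can show that the effective range $U_i - L_i$ of each nonlinear variable over the set of candidate optimal vertices is of order $O(n\Delta)$ relative to the oscillation of $f$ on the feasible region; this is what produces the $(2n\Delta)^2$ term inside the square root. The remaining $1/\epsilon$ term absorbs the normalization by $f_{\max} - f(x^*)$ in the definition \eqref{eq: epsilon} and an integrality-based lower bound on that denominator. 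Carrying out the coupling between the subdeterminant-driven range bound, the chord-error estimate, and the relative-error normalization is the delicate part; once the range control is in place, the remaining arithmetic is routine and the size/data properties of the individual ILPs follow directly from the construction.
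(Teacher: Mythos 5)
Your first half --- the grid of boxes, the affine chords, and the bound $f(x^\diamond)-f(x^*)\le\sum_{i} q_i(U_i-L_i)^2/(4N^2)$ --- matches the paper's Step~3 and inequality \eqref{eq: claim good}. The genuine gap is in the second half, the lower bound on $f_{\max}-f(x^*)$, which you describe only as a ``Cramer-rule / proximity argument'' showing that the ``effective range'' of each nonlinear variable over candidate optimal vertices is $O(n\Delta)$. That is not the statement that is needed, and as phrased it does not close the argument. What the paper actually does is: (i) let the index attaining $\gamma=\max_i q_i(u_i-l_i)^2$ be $i=1$, take integral feasible points $x^l,x^u$ with $x^l_1=l_1$ and $x^u_1=u_1$, and observe that their midpoint $x^\circ$ satisfies $f(x^\circ)-f(x^*)\ge\gamma/4$; (ii) since $x^\circ$ need not be integral, write $x^l-x^\circ$ as a conic combination of at most $n$ integral generators (with entries bounded by $\Delta$, via Cramer's rule) of the cone $\{x:\widetilde W_1x\le 0,\ \widetilde W_2x\ge 0\}$, and round the coefficients to produce two integral feasible points $x^-,x^+$ with $x^\circ=(x^-+x^+)/2$ and $\lvert x_i^{\pm}-x_i^\circ\rvert\le n\Delta$ in the nonlinear coordinates; (iii) a chord estimate on the box of half-width $n\Delta$ centered at $x^\circ$ shows that one of $x^-,x^+$ loses at most $\gamma k(n\Delta)^2/g^2$ relative to $f(x^\circ)$, whence $f_{\max}-f(x^*)\ge\gamma\bigl(g^2-k(2n\Delta)^2\bigr)/(4g^2)$ and the ratio in \eqref{eq: epsilon} is at most $k/\bigl(g^2-k(2n\Delta)^2\bigr)\le\epsilon$.

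Step (iii) silently uses $q_i\le\gamma/g^2$ for every nonlinear $i$, which holds only when every range $u_i-l_i$ is at least $g$. Your proposal has no mechanism for variables with small range, and for those the bound fails badly: if $u_i-l_i=1$, then $q_i$ can be as large as $\gamma$, the rounding loss $q_i(n\Delta)^2$ swamps the gain $\gamma/4$, and the lower bound becomes vacuous. The paper's remedy is the recursive decomposition of Step~2: whenever some nonlinear variable has integral range below $\tilde g$, it is fixed to each of its at most $\tilde g$ integer values and the algorithm recurses on subproblems with one fewer nonlinear variable; this recursion is also what produces the $(3+g)^k$ count rather than a bare $g^k$. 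Without both the midpoint-plus-rounding construction and this decomposition, the coupling you yourself flag as ``the delicate part'' cannot be carried out, so the proposal as it stands does not prove the theorem.
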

Assume now that $k$ is a fixed number.
In this case, the number of ILPs that our algorithm solves is polynomial in $n$, $\Delta$, and $1/\epsilon$.
Hence, Theorem~\ref{th: delta} implies that the discovery of a polynomial-time algorithm for ILPs with subdeterminants bounded by some polynomial in the input size, would directly imply the existence of a polynomial-time approximation algorithm for \eqref{prob: IQP} with subdeterminants bounded by the same polynomial.

Consider now the case $\Delta \le 2$.
Then Theorem~\ref{th: delta} and the strongly polynomial-time solvability of totally bimodular ILPs \cite{ArtWeiZen17} imply the following result.
\begin{corollary}
\label{cor: delta}
Consider problem \eqref{prob: IQP} where $\Delta \le 2$.
For every $\epsilon \in (0,1]$ there is an algorithm that finds an $\epsilon$-approximate solution 
in a number of operations bounded by
\begin{align*}
\pare{3 + \left\lceil\sqrt{k \left((4n)^2 + \frac{1}{\epsilon}\right)}\right\rceil}^k \poly(n,m).
\end{align*}
\end{corollary}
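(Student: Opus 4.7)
The plan is to derive Corollary~\ref{cor: delta} essentially as a direct specialization of Theorem~\ref{th: delta} to the setting $\Delta \le 2$, where the resulting ILP subproblems become tractable in strongly polynomial time. First I would invoke Theorem~\ref{th: delta}: for $\Delta \le 2$ we have $2n\Delta \le 4n$, so the bound on the number of ILPs to be solved becomes
\begin{equation*}
\left(3 + \left\lceil\sqrt{k\left((4n)^2 + \tfrac{1}{\epsilon}\right)}\right\rceil\right)^{k}.
\end{equation*}

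Next, I would argue that each individual ILP produced by the algorithm of Theorem~\ref{th: delta} can be solved in $\poly(n,m)$ operations. The theorem guarantees that each such ILP has integral data, at most $n$ variables, at most $m$ linear inequalities together with possibly additional variable bounds, and a constraint matrix whose subdeterminants are bounded by $\Delta \le 2$ in absolute value. In other words, each ILP is totally bimodular. By the result of Artmann, Weismantel, and Zenklusen~\cite{ArtWeiZen17}, totally bimodular ILPs can be solved in strongly polynomial time, and the running time is a polynomial in the number of variables and the number of constraints; in our case this is $\poly(n,m)$ (the extra variable bounds contribute at most $2n$ further rows, which is absorbed into the polynomial).

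Finally, I would multiply the two bounds and observe that the total work of the algorithm of Theorem~\ref{th: delta} outside of solving the ILPs is itself polynomial in the input size, hence also $\poly(n,m)$, which can be absorbed into the final estimate. Combining these contributions yields the stated bound
\begin{equation*}
\left(3 + \left\lceil\sqrt{k\left((4n)^2 + \tfrac{1}{\epsilon}\right)}\right\rceil\right)^{k} \poly(n,m)
\end{equation*}
on the total number of operations.

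The only subtlety, and the step I would check carefully, is confirming that the ILP subproblems really fall within the scope of~\cite{ArtWeiZen17} once the auxiliary variable bounds are added: since Theorem~\ref{th: delta} explicitly states that the subdeterminant bound $\Delta$ is preserved by these additional bounds, this is immediate, but it is the one place where one must be careful not to lose the bimodularity.
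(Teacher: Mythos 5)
Your proposal is correct and matches the paper's derivation exactly: the paper obtains Corollary~\ref{cor: delta} by specializing the ILP count of Theorem~\ref{th: delta} to $\Delta\le 2$ (so $2n\Delta\le 4n$) and invoking the strongly polynomial-time algorithm of Artmann, Weismantel, and Zenklusen for ILPs with subdeterminants bounded by $2$. Your closing check that the additional variable bounds preserve the subdeterminant bound is the right detail to verify, and it is indeed covered by the statement of Theorem~\ref{th: delta}.
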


If $k$ is fixed, Corollary~\ref{cor: delta} implies that we can find an $\epsilon$-approximate solution in a number of operations that is polynomial in the number of variables and constraints of the problem and in $1/\epsilon$.
In particular, the number of operations is strongly polynomial in the input size since it is independent of the vectors $q,h,w$ in \eqref{prob: IQP}.
We remark that this is the first known polynomial-time approximation algorithm for this problem.

When $\Delta \le 2$, our result closes the gap between the best known algorithms for 
\eqref{prob: IQP} and its continuous version
obtained 
by dropping the integrality constraint:
\begin{align}
\label{prob: CQP}
\tag{\ensuremath{\mathcal{CQP}}}
\begin{split}
\min & \quad \sum_{i=1}^k -q_i x_i^2 + h^\top x \\
\st & \quad Wx \le w \\
& \quad x \in \R^n.
\end{split}
\end{align}
In fact, in \cite{Vav92c} Vavasis gives an algorithm that finds an $\epsilon$-approximate solution to \eqref{prob: CQP} in strongly polynomial-time when $k$ is fixed.



\subsection{The totally unimodular case}

A fundamental special case of 
\eqref{prob: IQP} is when $W$ is totally unimodular (TU), \ie $\Delta = 1$.
Examples of TU matrices include incidence matrices of directed graphs and of bipartite graphs, matrices with the consecutive-ones property, and network matrices (see, \eg \cite{SchBookIP}). 
A characterization of TU matrices is given by Seymour~\cite{Sey80}.
Many types of applications can be formulated with a TU constraint matrix, including a variety of network and scheduling problems.

If the matrix $W$ is TU, a fundamental result by Hoffman and Kruskal \cite{HofKru56} implies that the polyhedron defined by $Wx \le w$ is integral.
Together with the concavity of the objective function this implies that it is polynomially equivalent to solve \eqref{prob: IQP} and \eqref{prob: CQP} to global optimality.
Problem \eqref{prob: CQP} contains as a special case the minimum concave-cost network flow problem with quadratic costs, which is NP-hard as shown by a reduction from the subset sum problem  (see proof in \cite{GuiPar90} for strictly concave costs).
Therefore, both 
\eqref{prob: CQP} and \eqref{prob: IQP} are NP-hard even if $W$ is TU.

In the special case where $W$ is TU 
we give an approximation algorithm which improves on the one of Theorem~\ref{th: delta} since it only needs to solve a number of linear programs (LPs) that is independent of the dimension $n$.

\begin{theorem}
\label{th: TU}
Consider problem \eqref{prob: IQP} where 
$W$ is TU.
For every $\epsilon \in (0,1]$ there is an algorithm that finds an $\epsilon$-approximate solution 
by solving
\begin{align*}
\pare{3 + \left\lceil\sqrt{k \left(1 + \frac{1}{\epsilon}\right)}\right\rceil }^k
\end{align*} 
LPs of size polynomial in the size of 
\eqref{prob: IQP}. 
Moreover, each LP has 
at most $n$ variables, 
at most $m$ linear inequalities and possibly additional variable bounds,
and a TU constraint matrix.
\end{theorem}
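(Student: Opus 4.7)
My plan is to follow the Vavasis-style~\cite{Vav92c} grid algorithm, streamlined by the TU hypothesis. The two crucial consequences of TU are (a) by the Hoffman--Kruskal theorem the polyhedron $\{x : Wx \le w\}$ is integral, so $a_i := \min\{x_i : Wx \le w\}$ and $b_i := \max\{x_i : Wx \le w\}$ are integers, and (b) appending integer bound constraints $\ell_i \le x_i \le u_i$ to the TU matrix $W$ yields again a TU matrix; consequently each grid subproblem can be solved as a single LP whose optimum may be taken integer, so no ILP is needed.

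First I would compute $a_i$ and $b_i$ for $i \le k$ by $2k$ LPs and handle the unbounded cases (any unbounded $x_i$ with $i \le k$ forces \eqref{prob: IQP} to be unbounded below via the $-q_i x_i^2$ term, and unboundedness restricted to linear variables is detected by one more LP). In the remaining case each $x_i$ has a finite integer range $[a_i, b_i]$ of width $R_i := b_i - a_i$. Setting $N := \lceil \sqrt{k(1 + 1/\epsilon)}\rceil$, I partition each $[a_i, b_i]$ into at most $N + 3$ integer-endpoint intervals of controlled width (the ``$+3$'' absorbs the unavoidable slack forced by the integer-endpoint requirement). For each of the $\le (N+3)^k$ boxes $B = \prod_{i \le k}[\ell_i,u_i]$ obtained by choosing one interval per coordinate, I replace $-q_i x_i^2$ by its secant $s_i(x_i) = q_i \ell_i u_i - q_i(\ell_i + u_i) x_i$ on $[\ell_i, u_i]$ and solve the LP minimizing $\sum_{i \le k} s_i(x_i) + h^\top x$ subject to $Wx \le w$ and $\ell_i \le x_i \le u_i$ for $i \le k$. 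By TU-preservation this LP has an integer optimum $x_B$, and the algorithm outputs the $x_B$ with smallest $f$-value.

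For the approximation analysis I would combine two elementary estimates. First, the pointwise secant error is $-q_i x_i^2 - s_i(x_i) = q_i(x_i - \ell_i)(u_i - x_i) \in [0, q_i(u_i - \ell_i)^2/4]$; on the box $B^*$ containing the IQP optimum $x^*$ this, combined with the chain $\tilde f(x_{B^*}) \le \tilde f(x^*) \le f(x^*) = f^*$ (where $\tilde f$ denotes the linearized objective), yields $f(x_{B^*}) - f^* \le \sum_i q_i(u_i - \ell_i)^2/4$. Second, the concavity identity $f((y+z)/2) - (f(y)+f(z))/2 = \sum_i q_i(y_i - z_i)^2/4$, evaluated at feasible $y, z$ realizing the range $R_i$ one coordinate at a time, yields a lower bound on $f_{\max} - f^*$ of the form $(1/k) \sum_i q_i R_i^2/4$. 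Putting these together with the width bound $(u_i - \ell_i)^2 \le R_i^2/N^2$ and $N^2 \ge k(1 + 1/\epsilon)$ gives $f(x_{B^*}) - f^* \le \epsilon(f_{\max} - f^*)$.

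The main technical obstacle is that the concavity identity refers to the continuous midpoint $(y+z)/2$, which need not be an integer point, so an extra argument is needed to certify the corresponding gap inside $\{x \in \Z^n : Wx \le w\}$. I would handle this by a case split on the sizes of the $R_i$. When all $R_i \le N+3$, the partition consists entirely of width-one intervals, on each of which $s_i$ coincides with $-q_i x_i^2$ at both endpoints; consequently $\tilde f$ agrees with $f$ at every integer point of every box, and the LP on each box solves the corresponding sub-IQP \emph{exactly}, so the overall algorithm returns the true optimum. In the complementary case some $R_i > N+3$, and the $q_i R_i^2/4$ gap guaranteed by concavity is large enough that the $O(\sum_j q_j)$ rounding loss incurred by replacing continuous midpoints with nearby integer points is absorbed; the ``$+1$'' inside $\sqrt{k(1 + 1/\epsilon)}$ contributes exactly the slack that makes the two regimes compatible.
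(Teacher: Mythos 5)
Your overall architecture (grid of secant underestimators; TU used both to make every subproblem an LP with an integral optimum and to round the continuous midpoint $x^\circ$ to a nearby feasible integer point via Hoffman--Kruskal) is the right one, and your Case~A (all ranges at most $N+3$, hence unit intervals and an exact answer) is fine. The gap is in Case~B. Your absorption argument needs the rounding loss $\tfrac14\sum_j q_j$ to be a small fraction of the concavity gap $\tfrac14\gamma$, where $\gamma := \max_i q_i R_i^2$, and for that you need $q_j \le q_j R_j^2/N^2$, i.e.\ $R_j \ge N$, for \emph{every} nonlinear coordinate $j$ --- not just for one. Your split ``all $R_i \le N+3$'' versus ``some $R_i > N+3$'' sends the mixed regime to Case~B, where the argument fails: take $k=2$, $q_1=1$, $R_1=M$, $q_2=M^2$, $R_2=1$. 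Then $\gamma = M^2$, while the rounding loss in coordinate~$2$ alone can be $q_2/4=M^2/4$ (the midpoint $x^\circ_2$ need not be integral, since $x^l$ and $x^u$ are chosen to realize the extremes of $x_1$, with no control on $x_2$), which wipes out the entire lower bound on $f_{\max}-f(x^*)$; with several such coordinates the ``lower bound'' even goes negative. The ``$+1$'' inside $\sqrt{k(1+1/\epsilon)}$ only buys a $\tilde k/\tilde g^2$ fraction of slack per coordinate, which is worthless for a coordinate whose range is below the threshold.

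The paper closes exactly this hole with a recursive decomposition rather than a global case split: whenever \emph{some} nonlinear variable has range smaller than $\tilde g = \bigl\lceil\sqrt{\tilde k(1+1/\epsilon)}\bigr\rceil$, that variable is fixed to each of its at most $\tilde g$ integer values, producing subproblems with one fewer nonlinear variable, and the mesh/underestimator step is run only on subproblems in which \emph{every} remaining nonlinear variable has range at least $\tilde g$. There the per-coordinate rounding loss $q_j/4$ is at most $q_j R_j^2/(4\tilde g^2)\le\gamma/(4\tilde g^2)$, the total loss is at most $\tilde k\gamma/(4\tilde g^2)$, and the ratio comes out to $\tilde k/(\tilde g^2-\tilde k)\le\epsilon$. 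The same recursion is also what yields the exact count $(3+g)^k$ by induction on $k$ (your count carries extra additive terms for the preprocessing LPs). To repair your proof, replace the two-case split by this per-variable fixing and recursion.
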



Since each LP with a TU constraint matrix can be solved in strongly polynomial time \cite{Tar86},
Theorem~\ref{th: TU} implies the following result.
\begin{corollary}
\label{cor: TU}
Consider problem \eqref{prob: IQP} where 
$W$ is TU.
For every $\epsilon \in (0,1]$ there is an algorithm that finds an $\epsilon$-approximate solution 
in a number of operations bounded by
\begin{align*}
\pare{3 + \left\lceil\sqrt{k \left(1 + \frac{1}{\epsilon}\right)}\right\rceil }^k \poly(n,m).
\end{align*}
\end{corollary}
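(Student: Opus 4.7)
The plan is to combine Theorem~\ref{th: TU} with Tardos' strongly polynomial-time algorithm for LPs with TU constraint matrices. By Theorem~\ref{th: TU}, the problem of finding an $\epsilon$-approximate solution to \eqref{prob: IQP} in the TU case reduces to solving at most
\[
\pare{3 + \left\lceil\sqrt{k \left(1 + \frac{1}{\epsilon}\right)}\right\rceil }^k
\]
linear programs, each with at most $n$ variables, at most $m$ linear inequalities plus possibly additional variable bounds, and each having a TU constraint matrix.

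The key point I would invoke is that every such LP can be solved in strongly polynomial time by the algorithm of Tardos~\cite{Tar86}, which applies whenever the constraint matrix has subdeterminants bounded by a polynomial in the dimension (and in particular to TU matrices). This yields a per-LP running time of $\poly(n,m)$, independent of the magnitudes of the entries of $q$, $h$, and $w$. Multiplying the number of LPs by this per-LP cost produces the advertised bound, and the overall running time remains strongly polynomial since the multiplicative factor depending on $k$ and $\epsilon$ does not involve the numerical data.

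One small technical point worth checking in the plan is that appending additional variable bounds to the original constraints does not destroy total unimodularity of the constraint matrix of each LP. This is automatic since variable bounds correspond to $\pm$unit-vector rows, and appending unit rows to a TU matrix preserves total unimodularity; moreover, Theorem~\ref{th: TU} already asserts that each LP produced by its algorithm has a TU constraint matrix, so no extra verification is needed here. I do not anticipate any other obstacle: the corollary is essentially the composition of a combinatorial reduction supplied by Theorem~\ref{th: TU} with a classical strongly polynomial LP subroutine.
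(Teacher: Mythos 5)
Your proposal is correct and matches the paper's own derivation, which obtains Corollary~\ref{cor: TU} exactly by combining Theorem~\ref{th: TU} with the strongly polynomial-time solvability of LPs with TU constraint matrices due to Tardos~\cite{Tar86}. The extra remark about unit-vector rows preserving total unimodularity is a fine sanity check but, as you note, already covered by the statement of Theorem~\ref{th: TU}.
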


\section{Related problems and algorithms}
\label{sec: related}

In this section we present optimization problems and algorithms that are closely related to the ones presented in this paper and we discuss their connection with our result. 
In Section~\ref{sec: literature} we review a number of related optimization problems and the state-of-the-art regarding their complexity.
In Section~\ref{sec: proof techniques} we discuss mesh partition and linear underestimators, which is the classic technique our algorithm builds on. 
Finally, in Section~\ref{sec: extensions} we discuss potential extensions and open questions.

\subsection{Related problems}
\label{sec: literature}

To the best of our knowledge, problem \eqref{prob: IQP} has not yet been studied in this generality.
In this section we present the state-of-the-art regarding optimization problems that are closely related to \eqref{prob: IQP}.

\subsubsection{Separable problems}

Some exact algorithms are known for the problem of minimizing a separable function over the integral points in a polytope. 

Horst and Van Thoai \cite{HorVan96} give a branch and bound algorithm for the case where the objective function is separable concave, the constraint matrix is TU, and box constraints $0 \le x \le u$ are explicitly given.
They obtain an algorithm that performs a number of operations that is polynomial in $m,n$ and the maximum $u_i$ among the bounds on the nonlinear variables, provided that the number of variables that appear nonlinearly in the objective is fixed.
In the worst case, this algorithm performs a number of operations that is exponential in the size of the vector $u$. 
An algorithm that carries out a comparable number of operations can be obtained by enumerating all possible subvectors of nonlinear variables in the box $[0,u]$ and solving, for each, the restricted problem, which is an ILP with a TU constraint matrix. 
To the best of our knowledge this is currently the best known algorithm to solve 
\eqref{prob: IQP} with a TU constraint matrix.

Meyer \cite{Mey77} gives a polynomial-time algorithm for the case where the objective function is separable convex, the feasible region is bounded, and the constraint matrix is TU.
Hochbaum and Shanthikumar \cite{HocSha90} extend this result by giving a polynomial-time algorithm for the case where the objective function is separable convex, the feasible region is bounded, and the largest subdeterminant of the constraint matrix is polynomially bounded.

\subsubsection{Polynomial problems}

A number of algorithms are known for the problem of optimizing a polynomial function over the mixed-integer points in a polyhedron. 

De Loera et al.~\cite{DeLHemKopWei08} present an algorithm to find an $\epsilon$-approximate solution to the problem of minimizing a polynomial function over the mixed-integer points in a polytope.
The number of operations performed is polynomial in the maximum total degree of the objective, the input size, and $1/\epsilon$, provided that the dimension is fixed.
They also give a fully polynomial-time approximation scheme for the problem of maximizing a nonnegative polynomial over mixed-integer points in a polytope, when the number of variables is fixed.

Del Pia et al.~\cite{dPDeyMol17} give a pseudo polynomial-time algorithm for the problem of minimizing a quadratic function over the mixed-integer points in a polyhedron when the dimension is fixed.

Hildebrand et al.~\cite{HilWeiZem16} give a fully polynomial-time approximation scheme for the problem of minimizing a quadratic function over the integral points in a polyhedron, provided that the dimension is fixed and the objective is homogeneous with at most one positive or negative eigenvalue.

Del Pia~\cite{dP16,dP18} gives an algorithm that finds an $\epsilon$-approximate solution to the problem of minimizing a concave quadratic function over the mixed-integer points in a polyhedron.
The number of operations is polynomial in the input size and in $1/\epsilon$, provided that the number of integer variables and the number of negative eigenvalues of the objective function are fixed.

Note that all these algorithms carry out a polynomial number of operations only if the number of integer variables is fixed.
This is in contrast with the results presented in this paper. 
Our assumptions on the separability of the objective and on the subdeterminants of the constraint matrix allow us to 
consider a general (not fixed) number of integer variables.

\subsubsection{Minimum concave cost network flow problem}

One of the most challenging problems of network optimization is the \emph{minimum concave cost network flow problem} (MCCNFP). 
Given a digraph $(V,A)$, the MCCNFP is defined as
\begin{align*}
\min & \quad \sum_{a \in A} c_a (x_a) \\
\st & \quad \sum_{a \in \delta^+(v)} x_a - \sum_{a \in \delta^-(v)} x_a = b(v) && \forall v \in V \\
& \quad 0 \le x_a \le u_a && \forall a \in A,
\end{align*}
where $c_a$ is the cost function for arc $a$, which is nonnegative and concave, $b(v)$ is the supply at node $v$, $\delta^+(v)$ and $\delta^-(v)$ are the set of outgoing and incoming arcs at node $v$, respectively, and $u_a$ is a bound on the flow $x_a$ on arc $a$.
For a discussion on the applications and a review of the literature on this problem, we refer the reader to the articles of Guisewite and Pardalos \cite{GuiPar90,GuiPar91}.
The MCCNFP is closely related to \eqref{prob: CQP} with a TU constraint matrix, since its constraint matrix is TU 
and its objective is separable and concave.
As we already mentioned, MCCNFP is NP-hard even with quadratic costs \cite{GuiPar90}, and its complexity is unknown if we assume that the number of nonlinear arc costs is fixed.
In view of its relevance to numerous applications, 
the MCCNFP has been the subject of intensive research.
Tuy et al.~\cite{TuyGhaMigVar95} give a polynomial-time algorithm for MCCNFP provided that the number of sources and nonlinear arc costs is fixed. 
See \cite{HeAhmNem15} and references in \cite{TuyGhaMigVar95} for other polynomially-solvable cases of the MCCNFP.

Like for general network flow problems, it is natural to consider the discrete version of the MCCNFP problem, where all flows on the arcs are required to be integral.
Our results in particular yield an algorithm to find an $\epsilon$-approximate solution to the integral MCCNFP with quadratic costs.
The number of operations performed by this algorithm is polynomial in the size of the digraph ($|V|$ and $|A|$) and in $1/\epsilon$, provided that the number of nonlinear arc costs is fixed.
In particular, the number of operations is independent of the quadratic costs, the supply vector, and the flow bounds.

\subsection{Proof techniques}
\label{sec: proof techniques}

Our algorithms build on the classic technique
of
mesh partition and linear underestimators.
This natural approach consists of replacing the nonlinear objective function by a piecewise linear approximation, an idea known in the field of optimization since at least the 1950s.
This general algorithmic framework is used in a variety of contexts in science and engineering, and the literature on them is expansive (see, \eg \cite{ParRos87,NemWol88,HorVan96,MagStr12}).

In the early 1990s Vavasis designed approximation algorithms for quadratic programming based on mesh partition and linear underestimators \cite{Vav92c,Vav92i}.
His most general result is 
a polynomial-time algorithm to find an $\epsilon$-approximate solution for the case where the objective has a fixed number of negative eigenvalues.
One of the main difficulties in proving these results consists in giving a lower bound on the value $f_{\max} - f(x^*)$ in the definition of $\epsilon$-approximate solution.
Vavasis' idea consists in constructing two feasible points along the most concave direction of the objective function, and then using their midpoint to obtain the desired bound.

In \cite{dP16,dP18} Del Pia employs mesh partitions and linear underestimators in concave mixed-integer quadratic programming.
He gives an algorithm that finds an $\epsilon$-approximate solution in polynomial-time, provided that
the number of negative eigenvalues and the number of integer variables are fixed.
Vavasis' technique is not directly applicable to the mixed-integer setting since the midpoint of two feasible points is generally not feasible.
To obtain the desired bound, these algorithms decompose the original problem into a fixed number of subproblems.
The geometry of the mixed-integer points guarantees that in each subproblem the midpoint is feasible and this is used to obtain the desired bound.

The decomposition approaches introduced in \cite{dP16,dP18} are not effective if the number of integer variables is not fixed.
The flatness-based algorithm described in \cite{dP16} could yield an exponential number of subproblems, and their constraint matrices can have subdeterminants larger than those of the original constraint matrix. 
The parity-based algorithm introduced in \cite{dP18} would not increase the subdeterminants, but it would yield 
$2^n$ subproblems.
To overcome these difficulties, in this paper we introduce a novel decomposition technique which does not increase the subdeterminants in the subproblems, and that generates a number of subproblems that is polynomial in $n,\Delta,\frac 1 \epsilon$, provided that $k$ is a fixed value.
While in each subproblem we cannot guarantee that the midpoint used to obtained the bound is feasible, the special combinatorial structure of the constraints allows us to show the existence of a feasible point with objective value close enough to that of the midpoint.
The obtained bound on the objective value of this feasible point allows us to give the desired bound on the value $f_{\max} - f(x^*)$.

\subsection{Extensions and open questions}
\label{sec: extensions}


The algorithms presented in this paper can also be applied to problems with any objective function sandwiched between two separable concave quadratic functions.
This is a consequence of a property of $\epsilon$-approximate solutions that we now present.

Consider an instance $I$ of a minimization problem that has an optimal solution, say $x^*$.
Let $f(x)$ denote the objective function,
and let $f_{\max}$ be the maximum value of $f(x)$ on the feasible region.
Let $f'(x)$ be a function such that for every feasible $x$ we have
\begin{align}
\label{eq: sandwich}
f(x) \le f'(x) \le f(x) + \xi (f_{\max} - f(x^*)),
\end{align}
where $\xi$ is a parameter in $[0,1)$.
Denote by $I'$ the instance obtained from $I$ by replacing the objective function $f(x)$ with $f'(x)$.

\begin{observation}
For every $\epsilon' \in (\frac{\xi}{1- \xi},1]$, any $\epsilon$-approximate solution to $I$, where $\epsilon := \epsilon' (1- \xi) - \xi$, is an $\epsilon'$-approximate solution to $I'$.
\end{observation}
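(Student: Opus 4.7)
The plan is to bound the ratio from the definition of $\epsilon'$-approximation at $x^\diamond$ for $I'$ directly, using the sandwich inequality \eqref{eq: sandwich}. Write $f^* := f(x^*)$, let $x'^*$ be an optimal solution of $I'$ with value $f'^* := f'(x'^*)$, and let $f'_{\max}$ denote the maximum of $f'$ on the feasible region. The case $f^* = f_{\max}$ is trivial: then \eqref{eq: sandwich} collapses to $f' = f$ on the feasible region, so $I' = I$ and any $\epsilon$-approximation for $I$ is automatically an $\epsilon'$-approximation for $I'$. So I would henceforth assume $f^* < f_{\max}$.

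For the numerator $f'(x^\diamond) - f'^*$, I would combine three facts: the right half of \eqref{eq: sandwich} applied at $x^\diamond$ gives $f'(x^\diamond) \le f(x^\diamond) + \xi(f_{\max} - f^*)$; the hypothesis that $x^\diamond$ is $\epsilon$-approximate for $I$ gives $f(x^\diamond) - f^* \le \epsilon(f_{\max} - f^*)$; and pointwise $f' \ge f$ on the feasible region (the left half of \eqref{eq: sandwich}) forces $f'^* \ge f^*$ by minimality. Chaining these produces $f'(x^\diamond) - f'^* \le (\epsilon + \xi)(f_{\max} - f^*)$.

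For the denominator $f'_{\max} - f'^*$, pointwise $f' \ge f$ yields $f'_{\max} \ge f_{\max}$, while applying \eqref{eq: sandwich} at the feasible point $x^*$ gives $f'^* \le f'(x^*) \le f^* + \xi(f_{\max} - f^*)$. Subtracting yields $f'_{\max} - f'^* \ge (1-\xi)(f_{\max} - f^*)$, which is strictly positive because $\xi < 1$. Dividing the two bounds, the ratio defining $\epsilon'$-approximation is at most $(\epsilon + \xi)/(1-\xi)$, and the substitution $\epsilon := \epsilon'(1-\xi) - \xi$ makes this exactly $\epsilon'$.

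The only care needed is to check that $\epsilon \in [0,1]$ so that the hypothesis of $x^\diamond$ being an $\epsilon$-approximation for $I$ is well-posed: $\epsilon \ge 0$ follows from $\epsilon' > \xi/(1-\xi)$, and $\epsilon \le 1$ follows from $\epsilon' \le 1$ together with $\xi \in [0,1)$. There is no substantive obstacle here; the entire argument is a short chain of inequalities, and the only real step is to relate the four quantities $f^*$, $f'^*$, $f_{\max}$, $f'_{\max}$ through the sandwich evaluated at the two points $x^\diamond$ and $x^*$.
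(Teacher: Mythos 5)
Your proposal is correct and follows essentially the same route as the paper's proof: bound the numerator and denominator of the $\epsilon'$-approximation ratio via the sandwich inequality evaluated at $x^\diamond$ and $x^*$ (together with $f'_{\max} \ge f_{\max}$ and $f'({x^*}') \ge f(x^*)$), yielding the ratio $(\epsilon+\xi)/(1-\xi) = \epsilon'$. The trivial case $f(x^*) = f_{\max}$ and the well-posedness check $\epsilon \in (0,1]$ are handled the same way, so there is nothing to add.
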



\begin{proof}
Let $\epsilon := \epsilon' (1- \xi) - \xi$ and note that $\epsilon \in (0,1]$.
Let $x^\diamond$ be an $\epsilon$-approximate solution to $I$. 
We show that $x^\diamond$ is an $\epsilon'$-approximate solution to $I'$.

Let ${x^*}'$ be an optimal solution to $I'$, and let $f'_{\max}$ be the maximum value of $f'(x)$ on the feasible region.
If $f(x^*) = f_{\max}$, then from \eqref{eq: sandwich} we have $f({x^*}') = f'_{\max}$, and $x^\diamond$ is an $\epsilon'$-approximate solution to $I'$.
Therefore, in the remainder of the proof we assume $f(x^*) < f_{\max}$.
Using the inequalities \eqref{eq: sandwich} we obtain 
$f(x^*) \le f'({x^*}') \le f(x^*) + \xi (f_{\max} - f(x^*))$ and 
$f_{\max} \le f'_{\max} \le f_{\max} + \xi (f_{\max} - f(x^*))$.
Hence
\begin{align*}
\frac{f'(x^\diamond) - f'({x^*}')}{f'_{\max} - f'({x^*}')} 
& \le
\frac{f(x^\diamond) - f(x^*) + \xi (f_{\max} - f(x^*))}{f_{\max} - f(x^*) - \xi (f_{\max} - f(x^*))} \\
& \le
\frac{\epsilon(f_{\max} - f(x^*)) + \xi (f_{\max} - f(x^*))}{f_{\max} - f(x^*) - \xi (f_{\max} - f(x^*))} \\
& = 
\frac{\epsilon + \xi}{1- \xi}
=
\frac{\epsilon' (1- \xi) - \xi + \xi}{1- \xi} 
=
\epsilon'.
\end{align*}
This shows that $x^\diamond$ is an $\epsilon'$-approximate solution to $I'$.
\end{proof}

We conclude this section by posing some natural open questions.
What is the computational complexity of problems \eqref{prob: IQP} and \eqref{prob: CQP}, if we assume that $k$ is fixed and that
the subdeterminants of $W$ are bounded by either $1$ or $2$ in absolute value?
Does there exist a polynomial-time algorithm that solves them exactly, or are they NP-hard?
To the best of our knowledge, all these questions are open even 
if we restrict ourselves to the case $k=1$, or to feasible regions of the form of MCCNFP.

Another interesting open question regards the problem obtained from \eqref{prob: IQP} by considering a general separable quadratic objective function.
In this setting, each variable that appears nonlinearly in the objective has a cost function that is either a convex or concave quadratic.
Does there exists a polynomial-time algorithm that finds an $\epsilon$-approximate solution to this problem, if we assume that $W$ is TU and that the number of concave variables is fixed?
The algorithm presented in this paper does not seem to extend to this case, even if we make the stronger assumption that the total number of variables that appear nonlinearly in the objective is fixed.
The main reason is that, in this setting, we are not able to give a suitable lower bound on the value $f_{\max} - f(x^*)$ in the definition of $\epsilon$-approximate solution.
This is because the two feasible points constructed along the most concave direction of the objective function might not be aligned in the convex directions, thus not even their midpoint yields the desired bound.


\section{Approximation algorithm}

\subsection{Description of the algorithm}
\label{sec: algorithm description}


In this section we describe our algorithm to find an $\epsilon$-approximate solution to 
\eqref{prob: IQP}.
The main difference from a standard algorithm based on mesh partition and linear underestimators is the decomposition of the problem in Step~2, and the specific choice of the mesh in Step~3.

Consider now our input problem \eqref{prob: IQP}, and recall that $\Delta$ denotes the largest absolute value of any subdeterminant of the constraint matrix $W$.
We also assume that $k \ge 1$, as otherwise the problem is an ILP.

\begin{step}
\label{step: prelim}
Feasibility and boundedness\end{step}
For every $i=1,\dots,k$, solve the two ILPs
\begin{align}
\label{pr: bounds}
\begin{split}
\min & \{x_i : Wx \le w, \ x \in \Z^n \}, \\
\max & \{x_i : Wx \le w, \ x \in \Z^n \}.
\end{split}
\end{align}
If any of these ILPs are infeasible, then the algorithm returns that 
\eqref{prob: IQP} is infeasible. 
If any of the ILPs in \eqref{pr: bounds} are unbounded, then the algorithm returns that 
\eqref{prob: IQP} is unbounded. 
Otherwise, let $\bar x$ be an integral vector that satisfies $Wx \le w$, which can be, for example, an optimal solution of one of the $2k$ ILPs just solved.

Solve the ILP
\begin{align}
\label{pr: aux}
\min \{ h^\top x : Wx \le w, \ x_i = \bar x_i, i=1,\dots,k, \ x \in \Z^n \}.
\end{align}
If 
\eqref{pr: aux} is unbounded, then the algorithm returns that 
\eqref{prob: IQP} is unbounded. 
Otherwise,
\eqref{prob: IQP} is feasible and bounded.

Initialize the list of problems to be solved as $\problems := \{$\eqref{prob: IQP}$\}$, and the list of possible approximate solutions to \eqref{prob: IQP} as $\solutions := \emptyset$.

\begin{step}
\label{step: dec}
Decomposition\end{step}

If $\problems = \emptyset$, then the algorithm returns the solution in $\solutions$ with the minimum objective function value.
Otherwise $\problems \neq \emptyset$, and let \eqref{prob: IQPtilde} be a problem in $\problems$.

Clearly, in the first iteration we have \eqref{prob: IQPtilde} = \eqref{prob: IQP}.
It will be clear from the description of the algorithm that, at a general iteration, \eqref{prob: IQPtilde} is obtained from 
\eqref{prob: IQP} by 
fixing 
a number of variables $x_i$, $i=1,\dots,k$, to integer values.
Thus, 
by eventually dropping a constant term in the objective, \eqref{prob: IQPtilde} is a bounded problem of the form
\begin{align}
\label{prob: IQPtilde}
\tag{\ensuremath{\widetilde{\mathcal{IQP}}}}
\begin{split}
\min &\quad \sum_{i=1}^{\tilde k} - \tilde q_i x_i^2 + \tilde h^\top x \\
\st & \quad \widetilde W x \le \tilde w \\
& \quad x \in \Z^{\tilde n}.
\end{split}
\end{align}
In this formulation, $x$ is the $\tilde n$-vector of unknowns and we have $\tilde n = \tilde k + n - k$.
The constraint matrix $\widetilde W \in \Z^{m \times \tilde n}$ 
is a column submatrix of $W$,
$\tilde w \in \Z^m$, 
$\tilde q \in \Z^{\tilde k}_{>0}$ is a subvector of $q$, 
and $\tilde h \in \Z^{\tilde n}$ is a subvector of $h$.
We remark that the variables $x_1,\dots,x_{\tilde k}$ in the formulation of \eqref{prob: IQPtilde} are not necessarily the first $\tilde k$ variables as ordered in \eqref{prob: IQP}, but rather a subset of $\tilde k$ variables of the original $k$ variables $x_1,\dots,x_k$.

If $\tilde k=0$, find an optimal solution to \eqref{prob: IQPtilde}, which is an ILP.
Add the corresponding solution to 
\eqref{prob: IQP} (obtained by restoring the $n - \tilde n$ components of $x \in \R^n$ fixed to obtain \eqref{prob: IQPtilde} from \eqref{prob: IQP}) to $\solutions$, remove \eqref{prob: IQPtilde} from $\problems$, and go back to Step~\ref{step: dec}.
Otherwise, 
for every $i=1,\dots,\tilde k$, solve the two bounded ILPs
\begin{align}
\label{eq: boundstilde}
\begin{split}
\tilde l_i & := \min \{x_i : \widetilde Wx \le \tilde w, \ x \in \Z^{\tilde n} \}, \\
\tilde u_i & := \max \{x_i : \widetilde Wx \le \tilde w, \ x \in \Z^{\tilde n} \}.
\end{split}
\end{align}
If any of these ILPs are infeasible, then 
remove \eqref{prob: IQPtilde} from $\problems$ and go back to Step~\ref{step: dec}.


Let $\tilde g := \left\lceil\sqrt{\tilde k \left((2 \tilde n \Delta)^2 + 1/\epsilon \right)}\right\rceil$.
If there exists an index $i \in \{1,\dots,\tilde k\}$ such that $\tilde u_i-\tilde l_i < \tilde g$, replace 
\eqref{prob: IQPtilde} in $\problems$ with all the subproblems of \eqref{prob: IQPtilde} obtained by fixing the variable $x_i$ to each integer value between $\tilde l_i$ and $\tilde u_i$, and go back to Step~\ref{step: dec}.
If there is no index $i \in \{1,\dots,\tilde k\}$ such that $\tilde u_i-\tilde l_i < \tilde g$,
continue with Step~\ref{step: mesh}.

\begin{step}
\label{step: mesh}
Mesh partition and linear underestimators\end{step}
Let $\Qpoly \subset \R^{\tilde k}$ be the polytope defined by
\begin{align*}
\Qpoly := \{ (x_1,\dots,x_{\tilde k}) \in \R^{\tilde k} : \tilde l_i \le x_i \le \tilde u_i, \ i = 1,\dots,\tilde k\}.
\end{align*}
Place a $(\tilde g+1) \times \cdots \times (\tilde g+1)$ grid of points in $\Qpoly$ defined by
\begin{align*}
\left\{
\begin{pmatrix}
\tilde l_1 \\
\tilde l_2 \\
\vdots \\
\tilde l_{\tilde k}
\end{pmatrix}
+ 
\frac{1}{\tilde g}
\begin{pmatrix}
i_1 (\tilde u_1 - \tilde l_1) \\
i_2 (\tilde u_2 - \tilde l_2) \\
\vdots \\
i_{\tilde k} (\tilde u_{\tilde k} - \tilde l_{\tilde k})
\end{pmatrix}
: 
i_1,\dots,i_{\tilde k} \in \{0, 1,\dots, \tilde g\}
\right\}.
\end{align*}
The grid partitions $\Qpoly$ into $\tilde g^{\tilde k}$ boxes.

For each box $\C = [r_1, s_1] \times \cdots \times [r_{\tilde k}, s_{\tilde k}] \subset \R^{\tilde k}$, among the $\tilde g^{\tilde k}$ boxes just constructed, define the affine function $\mu: \R^{\tilde n} \to \R$ 
as 
\begin{align}
\label{eq: underestimator}
\mu(x) := \sum_{i=1}^{\tilde k} (-\tilde q_i(r_i+s_i) x_i + \tilde q_i r_i s_i),
\end{align}
and solve the bounded ILP
\begin{align}
\label{prob: ILP on box}
\begin{split}
\min & \quad \mu(x)+ \tilde h^\top x \\
\st & \quad \widetilde Wx \le \tilde w \\
& \quad \ceil{r_i} \le x_i \le \floor{s_i} \qquad i=1,\dots,\tilde k \\
& \quad x \in \Z^{\tilde n}.
\end{split}
\end{align}

Let $x^\diamond$ be the best solution among all the (at most) $\tilde g^{\tilde k}$ optimal solutions just obtained.
Add to $\solutions$ the corresponding solution to 
\eqref{prob: IQP}, remove \eqref{prob: IQPtilde} from $\problems$, and go back to Step~\ref{step: dec}.

\subsection{Operation count}
\label{sec: count}

In this section we analyze the number of operations performed by our algorithm.

\begin{proposition}
\label{prop: operation count}
The algorithm described in Section \ref{sec: algorithm description} solves at most
\begin{align*}
\left(3 + \left\lceil\sqrt{k \left((2 n \Delta)^2 + \frac{1}{\epsilon}\right)}\right\rceil \right)^k
\end{align*}
ILPs of size polynomial in the size of 
\eqref{prob: IQP}. 
Moreover, each ILP has integral data,
at most $n$ variables, 
at most $m$ linear inequalities and possibly additional variable bounds,
and a constraint matrix with subdeterminants bounded by $\Delta$ in absolute value.
\end{proposition}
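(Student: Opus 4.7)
My plan is to view the run of the algorithm as a rooted tree whose nodes are the subproblems \eqref{prob: IQPtilde} handled by Step~\ref{step: dec}, and to bound the total number of ILPs by induction on the number $\tilde k$ of nonlinear variables at the root of a subtree. The first observation is that every subproblem satisfies $\tilde k\le k$ and $\tilde n\le n$, so its local mesh parameter $\tilde g$ is uniformly bounded by $g:=\left\lceil\sqrt{k((2n\Delta)^2+1/\epsilon)}\right\rceil$. Consequently a decomposition step produces at most $\tilde g\le g$ children (each with $\tilde k-1$ nonlinear variables), and a mesh step solves at most $\tilde g^{\tilde k}\le g^{\tilde k}$ ILPs of the form \eqref{prob: ILP on box}.

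Let $T(\tilde k)$ denote the maximum total number of ILPs solved in any subtree whose root has $\tilde k$ nonlinear variables. Then $T(0)=1$ (Step~\ref{step: dec} solves a single ILP in the base case) and, for $\tilde k\ge 1$,
\[
T(\tilde k)\ \le\ 2\tilde k\ +\ \max\bigl\{\,g\cdot T(\tilde k-1),\ g^{\tilde k}\,\bigr\},
\]
where the $2\tilde k$ counts the bound ILPs \eqref{eq: boundstilde}. Since $g(3+g)^{\tilde k-1}\ge g^{\tilde k}$, induction yields
\[
T(\tilde k)\ \le\ 2\tilde k+g(3+g)^{\tilde k-1}\ \le\ 3(3+g)^{\tilde k-1}+g(3+g)^{\tilde k-1}\ =\ (3+g)^{\tilde k},
\]
the middle inequality reducing to the easy estimate $2\tilde k\le 3(3+g)^{\tilde k-1}$, valid for $\tilde k\ge 1$. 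The $2k+1$ further ILPs from \eqref{pr: bounds} and \eqref{pr: aux} in Step~\ref{step: prelim} are absorbed into the same clean bound by the slack in this last estimate.

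For the structural claims, each ILP solved is obtained from \eqref{prob: IQP} by deleting columns (for variables fixed during decomposition) and possibly appending rows of the form $\pm e_j^\top$ to encode variable fixings or the box bounds $\lceil r_i\rceil\le x_i\le\lfloor s_i\rfloor$ used in \eqref{prob: ILP on box}. A Laplace expansion along any added unit row shows that appending such rows cannot increase the largest absolute subdeterminant, so every constraint matrix has subdeterminants bounded by $\Delta$; the bounds of at most $n$ variables and at most $m$ non-unit linear inequalities are preserved by construction. The main obstacle is organising the counting so that the $2\tilde k$ overhead per node is absorbed cleanly into the exponential $(3+g)^{\tilde k}$, which is why the induction uses the base $3+g$ rather than just $g$; once this slack is built in, each inductive step is straightforward.
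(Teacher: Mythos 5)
Your overall strategy---viewing the run as a tree of subproblems and inducting on the number of nonlinear variables with the base $3+g$, using $2\tilde k\le 3(3+g)^{\tilde k-1}$ to absorb the per-node overhead---is essentially the paper's own argument, and your treatment of the structural claims (column submatrices of $W$ plus appended unit rows, whose subdeterminants reduce to those of $W$ by Laplace expansion) matches the paper as well.

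There is, however, a genuine slip in how you account for Step~\ref{step: prelim}. Your recurrence charges the $2\tilde k$ bound ILPs \eqref{eq: boundstilde} to \emph{every} node, including the root, and you then add ``$2k+1$ further ILPs from \eqref{pr: bounds} and \eqref{pr: aux}'' on top of $T(k)$. This double-counts the $2k$ bound ILPs at the root: the problems \eqref{eq: boundstilde} for the root coincide with \eqref{pr: bounds} and are not solved a second time (the paper makes this observation explicitly, and it is needed to get the constant $3$). With the double count the bound fails already at $k=1$: there $T(1)\le 2+g$, and adding the $2k+1=3$ ILPs of Step~\ref{step: prelim} gives $5+g>3+g$; equivalently, the inequality $4k+1\le 3(3+g)^{k-1}$ that your ``absorbed by the slack'' claim would require is false for $k=1$. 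The fix is to count the root's contribution as $2k+1+\max\bigl\{g\,T(k-1),\,g^{k}\bigr\}$, after which $2k+1\le 3(3+g)^{k-1}$ (with equality at $k=1$) yields the stated bound. A second, minor omission: the proposition also asserts that each ILP has size polynomial in the size of \eqref{prob: IQP}; this needs the (routine) remark that the bounds $\tilde l_i,\tilde u_i$ and the vector $\bar x$ used in \eqref{pr: aux} can be chosen of size polynomial in the size of $Wx\le w$, which your write-up does not address.
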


\begin{proof}
The ILPs solved by our algorithm are problems \eqref{pr: bounds}, \eqref{pr: aux}, \eqref{prob: IQPtilde} when $\tilde k = 0$, \eqref{eq: boundstilde}, and \eqref{prob: ILP on box}.
Any system of inequalities $\widetilde W x \le \tilde w$ in these ILPs is obtained from the original system $Wx \le w$ by fixing 
a number of variables $x_i$, $i=1,\dots,k$, to integer values.
Hence, the matrix $\widetilde W$ is a column submatrix of $W$, and the vector $\tilde w$ is integral.
It follows that each ILP has integral data, at most $n$ variables and at most $m$ linear inequalities.
The problems \eqref{pr: aux} and \eqref{prob: ILP on box} have additional variable bounds.
The constraint matrices of these problems are, respectively,
\begin{align*}
W, 
\quad
\begin{pmatrix}
W \\
I \\
-I
\end{pmatrix},
\quad
\widetilde W,
\quad
\widetilde W,
\quad
\begin{pmatrix}
\widetilde W \\
I \\
-I
\end{pmatrix},
\end{align*}
where $I$ denotes the identity matrix.
Therefore, all these constraint matrices have subdeterminants bounded by $\Delta$ in absolute value.
To see that each ILP has size polynomial in the size of 
\eqref{prob: IQP}, note that 
the vectors $\tilde l, \tilde u$ have size polynomial in the size of $Wx \le w$, and that
$\bar x$ in \eqref{pr: aux} 
can be chosen 
of size polynomial in the size of $Wx \le w$ \cite{SchBookIP}.

In the rest of the proof we show that the algorithm solves in total at most $(3 + g)^k$ ILPs, where $g := \left\lceil\sqrt{k \left((2n\Delta)^2 + 1/\epsilon \right)}\right\rceil$.
We show this statement by induction on the number $k \ge 1$ of variables that appear nonlinearly in the objective.

For the base case $k=1$ we consider 
\eqref{prob: IQP} where one variable appears nonlinearly in the objective.
In Step~\ref{step: prelim} our algorithm solves the two ILPs \eqref{pr: bounds} and the ILP \eqref{pr: aux}.
In Step~\ref{step: dec} the algorithm selects problem \eqref{prob: IQPtilde}$=$\eqref{prob: IQP} from $\problems$, thus we have $\tilde k = 1$.
The algorithm does not need to solve the two ILPs \eqref{eq: boundstilde} since they coincide with problems \eqref{pr: bounds} already solved in Step~\ref{step: prelim}.
Then the algorithm defines $\tilde g := g$.
We first consider the case where we have $\tilde u_1-\tilde l_1 \ge g$.
In this case 
\eqref{prob: IQP} does not get decomposed in Step~\ref{step: dec}, and in Step~\ref{step: mesh} the algorithm solves $g$ ILPs.
The total number of ILPs solved is then $3+g$.
Consider now the the remaining case where $\tilde u_1-\tilde l_1 < g$.
In this case 
\eqref{prob: IQP} gets decomposed in Step~\ref{step: dec}, and we obtain $\tilde u_1-\tilde l_1+1$ subproblems.
Since $\tilde u_1$, $\tilde l_1$, and $g$ are integers, we have that the number of subproblems is at most $g$.
Each subproblem is a single ILP.
The total number of ILPs is then at most $3 + g$. 

For the induction step, we consider 
\eqref{prob: IQP} with $k \ge 2$ variables that appear nonlinearly in the objective.
In Step~\ref{step: prelim} our algorithm solves the $2k$ ILPs \eqref{pr: bounds} and the ILP \eqref{pr: aux}.
In Step~\ref{step: dec} the algorithm selects problem \eqref{prob: IQPtilde}$=$\eqref{prob: IQP} from $\problems$, thus we have $\tilde k = k$.
The algorithm does not 
solve the $2k$ ILPs \eqref{eq: boundstilde} since they coincide with problems \eqref{pr: bounds}.
Then the algorithm defines $\tilde g := g$.
We first consider the case where for every index $i \in \{1,\dots, k\}$ we have $\tilde u_i-\tilde l_i \ge g$.
In this case 
\eqref{prob: IQP} does not get decomposed in Step~\ref{step: dec}, and in Step~\ref{step: mesh} the algorithm solves $g^k$ ILPs.
The total number of ILPs solved is then $2k+1+g^k \le 3^k + g^k \le (3+g)^k$ since $2k+1 \le 3^k$ for $k \ge 1$.
Consider now the the remaining case where there is an index $i \in \{1,\dots, k\}$ such that $\tilde u_i-\tilde l_i < g$.
In this case 
\eqref{prob: IQP} gets decomposed in Step~\ref{step: dec}, and we obtain $\tilde u_i-\tilde l_i+1 \le g$ subproblems.
Each subproblem has $n-1$ variables and $k - 1$ variables that appear nonlinearly in the objective. 
It is simple to see that the number of ILPs that will be solved for each of these subproblems is at most the number of ILPs that would be solved by running the algorithm from scratch with the subproblem as input.
Therefore, by induction, for each subproblem the algorithm solves in total at most 
\begin{align*}
\left(3 + \left\lceil\sqrt{(k - 1) \left((2(n-1) \Delta)^2 + 1/\epsilon \right)}\right\rceil \right)^{k - 1} \le (3 + g)^{k - 1}
\end{align*} 
ILPs.
The total number of ILPs is then at most $2k+1 + g (3+g)^{k - 1}$. 
The latter number is upper bounded by $(3+g)^k$ since $2k+1 \le 3(3+g)^{k-1}$ for every $k \ge 1$.
\end{proof}

\subsection{Correctness of the algorithm}
\label{sec: correctness}

In this section we show that the algorithm detailed in Section \ref{sec: algorithm description} yields an $\epsilon$-approximate solution to 
\eqref{prob: IQP}.
Together with Proposition~\ref{prop: operation count}, this provides 
a proof of Theorem~\ref{th: delta}.

\subsubsection{Feasibility and boundedness}
\label{sec: prelim}

Step~\ref{step: prelim} of the algorithm is analogous to the corresponding part of the algorithm for concave mixed-integer quadratic programming presented in \cite{dP18}.
Moreover, Proposition~1 in \cite{dP18} implies that Step~\ref{step: prelim} of the algorithm correctly determines if 
\eqref{prob: IQP} is infeasible 
or
unbounded.
In particular, if the algorithm continues to Step~\ref{step: dec}, 
then
\eqref{prob: IQP} is feasible and bounded.

\subsubsection{Decomposition}

In this section we show that 
the decomposition of the problem performed in Step~\ref{step: dec} of the algorithm correctly returns an $\epsilon$-approximate solution.

\begin{proposition}
Assume that in Step~\ref{step: mesh} of the algorithm, $x^\diamond$ is an $\epsilon$-approximate solution to the chosen problem \eqref{prob: IQPtilde} in $\problems$.
Then the algorithm correctly returns an $\epsilon$-approximate solution to 
\eqref{prob: IQP}.
\end{proposition}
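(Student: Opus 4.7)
The plan is to reduce the proposition to a combination lemma: if the integer feasible region of a minimization problem $P$ is partitioned into the feasible regions of subproblems $P_1,\dots,P_t$ that share the same objective up to an additive constant, and if for each $P_j$ the candidate $x_j^\diamond$ is either optimal or an $\epsilon$-approximate solution to $P_j$, then the point of minimum objective value among $x_1^\diamond,\dots,x_t^\diamond$ is an $\epsilon$-approximate solution to $P$. The proposition will then follow by induction on the decomposition tree rooted at \eqref{prob: IQP}: each internal node is a problem decomposed in Step~\ref{step: dec} by fixing some nonlinear variable $x_i$ to every integer value in $\{\tilde l_i,\dots,\tilde u_i\}$, and each leaf is either solved exactly (when $\tilde k = 0$) or solved approximately in Step~\ref{step: mesh} by hypothesis. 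The tree is finite because every decomposition strictly decreases the number of nonlinear variables.

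First I would verify the partition property at each application of Step~\ref{step: dec}. When $x_i$ is fixed to each integer value in $\{\tilde l_i,\dots,\tilde u_i\}$, every integer feasible point of \eqref{prob: IQPtilde} lies in exactly one resulting subproblem, since $\tilde l_i$ and $\tilde u_i$ are by \eqref{eq: boundstilde} the minimum and maximum values of $x_i$ on the integer feasible region. A subproblem that is later found infeasible and discarded from $\problems$ contributes only the empty set to the partition, so no feasible point is ever lost. Moreover, fixing a nonlinear variable $x_i=\bar x_i$ alters the objective only by the additive constant $-\tilde q_i \bar x_i^2 + \tilde h_i \bar x_i$, and such a shift preserves $\epsilon$-approximate solutions by the invariance properties of the definition \eqref{eq: epsilon} recorded in the introduction.

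Next I would prove the combination lemma. Let $f$ denote the common objective of $P$, let $x^*$ be an optimal solution to $P$, and let $f_{\max}$ and $f_{\max}^{(j)}$ be the maxima of $f$ over the feasible regions of $P$ and $P_j$ respectively; trivially $f_{\max}^{(j)} \le f_{\max}$. If $f(x^*) = f_{\max}$ the conclusion is immediate from the definition of $\epsilon$-approximate solution. Otherwise let $j^*$ be the unique index with $x^*$ feasible for $P_{j^*}$; by optimality of $x^*$ in $P$, it is also optimal for $P_{j^*}$, and the hypothesis on $x_{j^*}^\diamond$ yields
\begin{equation*}
f(x_{j^*}^\diamond) - f(x^*) \le \epsilon\bigl(f_{\max}^{(j^*)} - f(x^*)\bigr) \le \epsilon\bigl(f_{\max} - f(x^*)\bigr),
\end{equation*}
whether $f(x^*)$ equals $f_{\max}^{(j^*)}$ (in which case the left-hand side is $0$) or not. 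Since the algorithm returns the element of $\solutions$ of minimum objective value, the returned point has objective value at most $f(x_{j^*}^\diamond)$, and therefore satisfies \eqref{eq: epsilon}.

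The main obstacle I anticipate is the bookkeeping required to confirm the invariant that every problem currently in $\problems$ is obtained from \eqref{prob: IQP} by fixing a subset of the variables $x_1,\dots,x_k$ to valid integer values. This invariant is exactly what makes the partition property propagate across successive decompositions and what ensures that the additive constants accumulated by repeated variable fixings do not corrupt the $\epsilon$-approximation guarantee when one reconstructs a candidate for \eqref{prob: IQP} from a candidate for \eqref{prob: IQPtilde}. Once this invariant is established and the combination lemma is in hand, structural induction on the finite decomposition tree delivers the desired $\epsilon$-approximate solution to \eqref{prob: IQP} at its root.
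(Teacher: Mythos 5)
Your proposal is correct and rests on the same key observation as the paper's proof: the global optimum $x^*$ remains feasible (hence optimal) in some subproblem of the decomposition, the maximum of $f$ over a subproblem is at most $f_{\max}$, so the $\epsilon$-guarantee transfers to \eqref{prob: IQP}, and taking the minimum over $\solutions$ preserves it. The only difference is packaging: you organize this as an explicit combination lemma plus induction on the decomposition tree, while the paper short-circuits the induction by directly selecting, among all problems ever placed in $\problems$ whose (lifted) feasible region contains $x^*$, one with the minimal number of non-fixed nonlinear variables and arguing it is never decomposed.
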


\begin{proof}
We have seen in Section~\ref{sec: prelim} that if 
\eqref{prob: IQP} is infeasible or unbounded, the algorithm correctly detects it in Step~\ref{step: prelim}, thus we now assume that it is feasible and bounded.
In this case, we need to show that the algorithm returns an $\epsilon$-approximate solution to 
\eqref{prob: IQP}.
To prove this, we only need to show that the algorithm eventually adds to the set $\solutions$ an $\epsilon$-approximate solution $x^\epsilon$ to \eqref{prob: IQP}.
In fact, let $x^\vartriangle$ be the vector returned at the end of the algorithm, i.e., the solution in $\solutions$ with the minimum objective function value when $\problems = \emptyset$.
As the objective value of $x^\vartriangle$ will be at most that of $x^\epsilon$, we have that also the vector $x^\vartriangle$ is an $\epsilon$-approximate solution to \eqref{prob: IQP}.

In this proof it will be useful to lift problems \eqref{prob: IQPtilde} to the space $\R^n$ where 
\eqref{prob: IQP} lives.
Recall that each 
\eqref{prob: IQPtilde} is obtained from 
\eqref{prob: IQP} by 
fixing 
a number of variables $x_i$, $i=1,\dots,k$, to integer values.
Specifically, say that we fixed $x_i$ to the integer value $\zeta_i$, for every $i \in J$, where $J \subseteq \{1,\dots,k\}$.
The corresponding lifted problem can be obtained from \eqref{prob: IQP} by instead adding the equations $x_i = \zeta_i$, for $i \in J$.
In the remainder of this proof, when we consider a problem \eqref{prob: IQPtilde} we refer to the equivalent lifted version.

We now show that the algorithm eventually adds to $\solutions$ an $\epsilon$-approximate solution $x^\epsilon$ to \eqref{prob: IQP}.
Let $x^*$ be a global optimal solution to \eqref{prob: IQP}.
Let \eqref{prob: IQPtilde} be a problem stored at some point in $\problems$ that 
contains the vector $x^*$ in the feasible region.
Among all these possible problems, we assume that \eqref{prob: IQPtilde} has a number $\tilde k$ of non-fixed variables that appear nonlinearly in the objective that is minimal.
Note that \eqref{prob: IQPtilde} does not get decomposed in Step~\ref{step: dec}.
Otherwise, the vector $x^*$ would be feasible for one of the subproblems of \eqref{prob: IQPtilde},
which will have a number of non-fixed variables that appear nonlinearly in the objective that is strictly smaller than $\tilde k$.
Hence, by assumption, when the algorithm selects 
\eqref{prob: IQPtilde} from $\problems$, it adds to $\solutions$ a vector $x^\epsilon$ that is an $\epsilon$-approximate solution to \eqref{prob: IQPtilde}.
Since the feasible region of \eqref{prob: IQPtilde} is contained in the feasible region of \eqref{prob: IQP}, and since the vector $x^*$ is feasible for \eqref{prob: IQPtilde}, it is simple to check that $x^\epsilon$ is an $\epsilon$-approximate solution to 
\eqref{prob: IQP}.
\end{proof}

\subsubsection{Mesh partition and linear underestimators}

In this section we show that the solution $x^\diamond$ constructed in Step~\ref{step: mesh} is an $\epsilon$-approximate solution to 
\eqref{prob: IQPtilde}.


We introduce some definitions in order to simplify the notation.
We denote by $q : \R^{\tilde n} \to \R$ the nonlinear part of the objective function of \eqref{prob: IQPtilde}, that is,
\begin{align*}
q(x) := \sum_{i=1}^{\tilde k} -\tilde q_i x_i^2.
\end{align*}
Moreover, we denote by $f : \R^{\tilde n} \to \R$ the objective function of \eqref{prob: IQPtilde}, i.e.,
\begin{align*}
f(x) := \sum_{i=1}^{\tilde k} -\tilde q_i x_i^2 + \tilde h^\top x = q(x) + \tilde h^\top x.
\end{align*}
We also define 
\begin{align*}
\gamma := \max \{\tilde q_i (\tilde u_i-\tilde l_i)^2 : i \in \{1,\dots,\tilde k\}\}.
\end{align*}
Intuitively, the index $i$ that achieves the maximum in the definition of $\gamma$ indicates the vector $e_i$ of the standard basis of $\R^{\tilde n}$ along which the function $q$ is the most concave with respect to the feasible region of \eqref{prob: IQPtilde}.
As a consequence, the value $\gamma$ provides an indication of how concave is problem \eqref{prob: IQPtilde}.
In order to show that the vector $x^\diamond$ is an $\epsilon$-approximate solution we will derive two bounds: 
(i) an upper bound on the value $f(x^\diamond) - f(x^*)$, where $x^*$ is an optimal solution of \eqref{prob: IQPtilde}, and
(ii) a lower bound on the value $f_{\max} - f(x^*)$, where $f_{\max}$ is the maximum value of $f(x)$ on the feasible region of \eqref{prob: IQPtilde}.
Both bounds will depend linearly on $\gamma$.
We remark that one of the main difficulties in obtaining a polynomial-time algorithm consists in making sure that the dependence on $\gamma$ cancels out in the ratio 
\eqref{eq: epsilon} 
in the definition of $\epsilon$-approximate solution.

\medskip
\noindent
\ul{An upper bound on the value $f(x^\diamond) - f(x^*)$.} \space
We describe how to obtain an upper bound on the objective function gap between our solution $x^\diamond$ and an optimal solution $x^*$ of \eqref{prob: IQPtilde}.
The derivation of this bound is standard in the context of mesh partition and linear underestimators of separable concave quadratic functions and is based on the lemma that we present next.
The argument is the same used in page~10 in \cite{Vav92c} and in Claim~2 in \cite{dP18}.
We give a complete proof because in these papers the result is not stated in the form that we need.



\begin{lemma}
\label{lem: underestimator}
Let $c : \R^{\tilde k} \to \R$ be a separable concave quadratic function defined by
\begin{align*}
c(x) := \sum_{i=1}^{\tilde k} - c_i x_i^2,
\end{align*}
where $c_i \ge 0$ for $i=1,\dots,{\tilde k}$.
Consider a box $\C = [r_1,s_1] \times \cdots \times [r_{\tilde k},s_{\tilde k}] \subset \R^{\tilde k}$,
and the affine function $\eta : \R^{\tilde k} \to \R$ defined by
\begin{align*}
\eta(x) := \sum_{i=1}^{\tilde k} (-c_i(r_i+s_i) x_i + c_i r_i s_i).
\end{align*}

Then, for every $x \in \C$, we have
\begin{align*}
\eta(x) \le c(x) \le \eta(x) + \frac 14 \sum_{i=1}^{\tilde k} c_i(s_i - r_i)^2.
\end{align*}
\end{lemma}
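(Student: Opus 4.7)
The plan is to reduce the statement to the one-dimensional case by exploiting the separable structure of both $c$ and $\eta$: since $c(x) - \eta(x) = \sum_{i=1}^{\tilde k} \bigl(-c_i x_i^2 + c_i(r_i+s_i)x_i - c_i r_i s_i\bigr)$, it suffices to prove, for each coordinate and for $a \in [r,s]$, the two inequalities
\begin{align*}
-c(r+s)a + crs \le -c a^2 \le -c(r+s)a + crs + \tfrac{1}{4} c (s-r)^2,
\end{align*}
with $c \ge 0$, and then sum over $i=1,\dots,\tilde k$.

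First I would observe that the one-variable function $\psi(a) := -c(r+s)a + crs$ is precisely the \emph{chord} of $\phi(a) := -c a^2$ on $[r,s]$: a direct substitution shows $\psi(r) = -cr^2 = \phi(r)$ and $\psi(s) = -cs^2 = \phi(s)$, so $\psi$ is the unique affine function agreeing with $\phi$ at the endpoints. Since $\phi$ is concave (as $c \ge 0$), every chord lies weakly below the graph on the connecting interval, giving $\psi(a) \le \phi(a)$ for all $a \in [r,s]$. This yields the left inequality $\eta(x) \le c(x)$ upon summing.

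For the right inequality, I would compute the pointwise gap explicitly:
\begin{align*}
\phi(a) - \psi(a) = -c a^2 + c(r+s)a - c r s = -c(a-r)(a-s).
\end{align*}
On $[r,s]$ the product $(a-r)(a-s)$ is nonpositive, confirming the gap is nonnegative, and its minimum (most negative value) is achieved at the midpoint $a = (r+s)/2$, where $(a-r)(a-s) = -(s-r)^2/4$. Hence
\begin{align*}
\phi(a) - \psi(a) \le \tfrac{1}{4} c (s-r)^2 \qquad \text{for all } a \in [r,s].
\end{align*}
Summing this coordinate-wise inequality over $i = 1,\dots,\tilde k$ yields the claimed upper bound $c(x) \le \eta(x) + \tfrac{1}{4}\sum_{i=1}^{\tilde k} c_i (s_i - r_i)^2$.

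There is no real obstacle here; the only thing to be careful about is verifying that $\eta$ really is the chord (an endpoint check) so that one can invoke concavity on the left inequality without ambiguity, and carrying out the elementary one-variable maximization of $-c(a-r)(a-s)$ correctly. The separability of both $c$ and $\eta$ makes the reduction to one dimension completely straightforward, and the entire argument is essentially a univariate observation applied coordinatewise.
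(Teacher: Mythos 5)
Your proof is correct and follows essentially the same route as the paper's: verify that the affine function agrees with $c$ at the box vertices (endpoints coordinatewise), invoke concavity for the lower bound, and bound the gap via the identity $-c_i x_i^2 - \eta_i(x_i) = c_i(x_i - r_i)(s_i - x_i)$, maximized at the midpoint. The only cosmetic difference is that you phrase the lower bound as a per-coordinate chord argument while the paper states it directly for the multivariate function on the box; the content is identical.
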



\begin{proof}
For each $i = 1,\dots, \tilde k$, we define the affine univariate function
\begin{align*}
\eta_i(x_i) := -c_i(r_i+s_i) x_i + c_i r_i s_i.
\end{align*}
The function $\eta_i$
satisfies 
$
\eta_i(r_i) = -c_i r_i^2$, $\eta_i(s_i) = -c_i s_i^2$, and we have
\begin{align*}
\eta(x) = \sum_{i=1}^{\tilde k} \eta_i(x_i).
\end{align*}
The separability of $c(x)$ implies that it attains the same values as $\eta(x)$ at all vertices of $\C$.
As the quadratic function $c(x)$ is concave, this in particular implies that $\eta(x) \le c(x)$.


We now show that $c(x) - \eta(x) \le \frac 14 \sum_{i=1}^{\tilde k} c_i(s_i - r_i)^2$.
From the separability of $c$ and of $\eta$, we obtain 
\begin{align*}
c(x) - \eta(x) = \sum_{i=1}^{\tilde k} (-c_i x_i^2 - \eta_i(x_i)).
\end{align*}
Using the definition of $\eta_i$, it can be derived that
\begin{align*}
-c_i x_i^2 - \eta_i(x_i) = c_i (x_i-r_i) (s_i - x_i).
\end{align*}
The univariate quadratic function on the right-hand side is concave, and its maximum is achieved at $x_i = (r_i+s_i)/2$.
This maximum value is $c_i(s_i - r_i)^2/4$, thus we establish that $c(x)-\eta(x) \le \frac 14 \sum_{i=1}^{\tilde k} c_i(s_i - r_i)^2$.
\end{proof}

%
%

Let $\C = [r_1,s_1] \times \cdots \times [r_{\tilde k},s_{\tilde k}]$ be a box constructed in Step~\ref{step: mesh} of the algorithm, and let $\mu: \R^{\tilde n} \to \R$ be the corresponding affine function defined in \eqref{eq: underestimator}.
Lemma~\ref{lem: underestimator} implies that,
for every $x \in \R^{\tilde n}$ with $(x_1,\dots,x_{\tilde k}) \in \C$,
\begin{align*}
\mu(x) \le q(x) \le \mu(x) + \frac 14 \sum_{i=1}^{\tilde k} \tilde q_i(s_i - r_i)^2.
\end{align*}
\begin{knownproof}
\color{blue}
\begin{proof}
For each $i = 1,\dots, \tilde k$, the affine univariate function
\begin{align}
\label{under}
\mu_i(x_i) := -\tilde q_i(r_i+s_i) x_i + \tilde q_i r_i s_i
\end{align}
satisfies 
$
\mu_i(r_i) = -\tilde q_i r_i^2$, and $
\mu_i(s_i) = -\tilde q_i s_i^2$.
Note that the affine function $\mu(x)$ from $\R^k$ to $\R$ defined in \eqref{eq: underestimator} satisfies
\begin{align*}
\mu(x) = \sum_{i=1}^{\tilde k} \mu_i(x_i).
\end{align*}
The separability of $q(x)$ implies that it attains the same values as $\mu(x)$ at all vertices of $\C$.
As the quadratic function $q(x)$ is concave, this in particular implies that $\mu(x) \le q(x)$.


We now show that $q(x) - \mu(x) \le \frac 14 \sum_{i=1}^{\tilde k} (\tilde q_i(s_i - r_i)^2)$.
From the separability of $q$ and of $\mu$, we obtain 
\begin{align*}
q(x) - \mu(x) = \sum_{i=1}^{\tilde k} (-\tilde q_i x_i^2 - \mu_i(x_i)).
\end{align*}
Using the explicit formula for $\mu_i$ given in \eqref{under}, it can be derived that
\begin{align*}
-\tilde q_i x_i^2 - \mu_i(x_i) = \tilde q_i (x_i-r_i) (s_i - x_i).
\end{align*}
The univariate quadratic function on the right-hand side is concave, and its maximum is achieved at $x_i = (r_i+s_i)/2$.
This maximum value is $\tilde q_i(s_i - r_i)^2/4$, thus we establish that $q(x)-\mu(x) \le \frac 14 \sum_{i=1}^{\tilde k} (\tilde q_i(s_i - r_i)^2)$.
\end{proof}
\color{black}
\end{knownproof}
Since $s_i - r_i = (\tilde u_i- \tilde l_i)/\tilde g$ and $\tilde q_i (\tilde u_i-\tilde l_i)^2 \le \gamma$ for $i=1,\dots,\tilde k$, we obtain that, for every $x \in \R^{\tilde n}$ with $(x_1,\dots,x_{\tilde k}) \in \C$,
\begin{align}
\label{eq: claim sub}
\mu(x) \le q(x) \le \mu(x) + \frac{\gamma \tilde k}{4 \tilde g^2}.
\end{align}

This relation allows us to show the existence of the vector $x^\diamond$.

\begin{claim}
In Step~\ref{step: mesh} the algorithm constructs a feasible solution $x^\diamond$ of \eqref{prob: IQPtilde}.
\end{claim}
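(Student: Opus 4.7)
The plan is to exhibit, among the at most $\tilde g^{\tilde k}$ ILPs of the form \eqref{prob: ILP on box} solved in Step~\ref{step: mesh}, at least one that is feasible; since each such ILP is bounded (the box forces $x_1,\dots,x_{\tilde k}$ into a bounded range, $\mu(x)$ only involves these variables, and $\tilde h^\top x$ is bounded below on the feasible region of \eqref{prob: IQPtilde}, which is itself bounded), a feasible ILP yields an optimal solution. Taking the best of these optima then produces a vector $x^\diamond$, and since the constraints $\widetilde W x \le \tilde w$, $x \in \Z^{\tilde n}$ of \eqref{prob: IQPtilde} are included in the constraints of every \eqref{prob: ILP on box}, this $x^\diamond$ is automatically feasible for \eqref{prob: IQPtilde}.

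The first step is to exploit the fact that, by the time the algorithm reaches Step~\ref{step: mesh}, none of the ILPs \eqref{eq: boundstilde} turned out to be infeasible, so there exists a feasible integer point $\bar x \in \Z^{\tilde n}$ with $\widetilde W \bar x \le \tilde w$ (for instance, any optimizer of one of the programs in \eqref{eq: boundstilde}). The component-wise bounds $\tilde l_i \le \bar x_i \le \tilde u_i$ for $i=1,\dots,\tilde k$ then imply $(\bar x_1,\dots,\bar x_{\tilde k}) \in \Qpoly$.

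Next, since the grid of Step~\ref{step: mesh} partitions $\Qpoly$ into the $\tilde g^{\tilde k}$ boxes, I would pick a box $\C = [r_1,s_1] \times \cdots \times [r_{\tilde k}, s_{\tilde k}]$ that contains $(\bar x_1,\dots,\bar x_{\tilde k})$. The key tiny observation is that each $\bar x_i$ is an integer satisfying $r_i \le \bar x_i \le s_i$, so necessarily $\lceil r_i \rceil \le \bar x_i \le \lfloor s_i \rfloor$. Consequently $\bar x$ satisfies every constraint of the ILP \eqref{prob: ILP on box} associated with $\C$, witnessing its feasibility. Combined with the boundedness argument above, this ILP admits an optimal solution, so $x^\diamond$ (the minimizer among the optima of the box ILPs) is well defined and belongs to the feasible region of \eqref{prob: IQPtilde}.

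I do not expect any serious obstacle here: the whole argument is a matter of bookkeeping, the only subtle point being the rounding step $r_i \le \bar x_i \le s_i \Rightarrow \lceil r_i \rceil \le \bar x_i \le \lfloor s_i \rfloor$, which uses the integrality of $\bar x$ in an essential way and ensures that the extra rounded bounds in \eqref{prob: ILP on box} do not eliminate the feasible point we have in hand.
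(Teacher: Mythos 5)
Your argument is correct and follows essentially the same route as the paper: boundedness of each box ILP \eqref{prob: ILP on box} is inherited from the boundedness of \eqref{prob: IQPtilde} established in Step~\ref{step: dec}, and feasibility of at least one box ILP is witnessed by a feasible integer point of \eqref{prob: IQPtilde} (e.g.\ an optimizer of \eqref{eq: boundstilde}), whose first $\tilde k$ coordinates land in some grid box and, by integrality, survive the rounding of the box bounds to $\ceil{r_i} \le \bar x_i \le \floor{s_i}$ --- a detail the paper leaves implicit and you rightly spell out. One correction to your parenthetical: the feasible region of \eqref{prob: IQPtilde} need \emph{not} be bounded (only the coordinates $x_1,\dots,x_{\tilde k}$ are confined to $[\tilde l_i,\tilde u_i]$; the remaining $\tilde n - \tilde k$ coordinates may recede), so you cannot deduce that $\tilde h^\top x$ is bounded below from boundedness of the region. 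The fact you need is still true, but for a different reason: since $q(x) \le 0$ one has $\tilde h^\top x = f(x) - q(x) \ge f(x)$, and $f$ is bounded below on the feasible region because the \emph{problem} \eqref{prob: IQPtilde} is bounded, which is exactly what Step~\ref{step: dec} guarantees.
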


\begin{cpf}
We need to show that all the ILPs \eqref{prob: ILP on box} are bounded and that at least one is feasible.

Consider a problem \eqref{prob: ILP on box}.
Note that its feasible region is contained in the feasible region of \eqref{prob: IQPtilde}.
Moreover, in view of \eqref{eq: claim sub}, the objective function of \eqref{prob: ILP on box} is lower bounded by $f(x) - \gamma \tilde k/(4 \tilde g^2)$.
Therefore, the boundedness of \eqref{prob: IQPtilde} established in Step~\ref{step: dec} implies the boundedness of the ILPs \eqref{prob: ILP on box}.

Note that 
\eqref{prob: IQPtilde} is feasible, since otherwise the ILPs \eqref{eq: boundstilde} would be infeasible too, and the algorithm would have not entered Step~\ref{step: mesh} with problem \eqref{prob: IQPtilde}.
Therefore at least one problem among the ILPs \eqref{prob: ILP on box} is feasible.
This shows that in Step~\ref{step: mesh} the algorithm indeed constructs a feasible solution $x^\diamond$ of \eqref{prob: IQPtilde}.
\end{cpf}

With a standard argument (see page~11 in \cite{Vav92c} or Claim~3 in \cite{dP18})
we can derive from \eqref{eq: claim sub} that 
\begin{align}
\label{eq: claim good}
0 \le f(x^\diamond) - f(x^*) \le \frac{\gamma \tilde k}{4\tilde g^2}.
\end{align}


\begin{knownproof}
\color{blue}
\begin{proof}
Let $C^\diamond$ be the box that yields the solution $x^\diamond$ and let $\mu^\diamond$ be the corresponding affine function defined in \eqref{eq: underestimator}.
Let $C^*$ be a box such that $x^* \in C^*$ and let $\mu^*$ be the corresponding affine function.
We have
\begin{align*}
f(x^\diamond) & \le \mu^\diamond(x^\diamond) + \tilde h^\top x^\diamond + \frac{\gamma \tilde k}{4\tilde g^2} \\
& \le \mu^*(x^*) + \tilde h^\top x^* + \frac{\gamma \tilde k}{4\tilde g^2} \\
& \le f(x^*) + \frac{\gamma \tilde k}{4\tilde g^2}.
\end{align*}
The first inequality follows because, from \eqref{eq: claim sub}, we have $q(x^\diamond) \le \mu^\diamond (x^\diamond) + \gamma \tilde k/(4\tilde g^2)$.
The second inequality holds by definition of $x^\diamond$.
The third inequality follows because, in view of \eqref{eq: claim sub}, we have $\mu^*(x^*) \le q(x^*)$.
\end{proof}
\color{black}
\end{knownproof}


\medskip
\noindent
\ul{A lower bound on the value $f_{\max} - f(x^*)$.} \space
Next, we derive a
lower bound on the gap between the maximum and the minimum objective function values of the
feasible points of 
\eqref{prob: IQPtilde}.
For ease of exposition, we denote by $\tilde \P$ the standard linear relaxation of the feasible region of \eqref{prob: IQPtilde}, i.e.,
\begin{align*}
\tilde \P := \{x \in \R^{\tilde n} : \widetilde Wx \le \tilde w\}.
\end{align*}

While all arguments so far 
hold even without the assumption that $\tilde u_i-\tilde l_i \ge \tilde g$ for every index $i \in \{1,\dots,\tilde k\}$, this assumption will be crucial to derive this bound.

Without loss of generality, we assume that the index $i \in \{1,\dots,\tilde k\}$ that yields the largest value $\tilde q_i(\tilde u_i - \tilde l_i)^2$ is $i=1$, therefore we have $\gamma = \tilde q_1(\tilde u_1 - \tilde l_1)^2$.
Let $x^l$ be an optimal solution of the ILP defining $\tilde l_1$ in \eqref{eq: boundstilde}.
Therefore $x^l \in \tilde \P \cap \Z^{\tilde n}$ and $x^l_1 = \tilde l_1$.
Similarly, there is a point $x^u \in \tilde \P \cap \Z^{\tilde n}$ such that $x^u_1 = \tilde u_1$.
We define the midpoint of the segment joining $x^l$ and $x^u$ as
\begin{align}
\label{eq: bullet}
x^\circ := \frac {x^l + x^u}2.
\end{align}
Note that the vector $x^\circ$ is in $\tilde \P$ but is generally not integral.

Using the properties of vectors $x^l$ and $x^u$ and the assumption on the index $i=1$, the following lower bound on $f(x^\circ) - f(x^*)$ can be derived (see Lemma~6 in \cite{Vav92c} or Claim~4 in \cite{dP18}):
\begin{align}
\label{eq: claim bad}
f(x^\circ) - f(x^*) 
\ge \frac \gamma 4.
\end{align}


\begin{knownproof}
\color{blue}
\begin{proof}
The claim follows from the chain of inequalities below.
\begin{align*}
f(x^\circ) & = \frac 12  \big(f(x^l) + f(x^u)\big) + \frac 14 \sum_{i=1}^{\tilde k} \tilde q_i (x^u_i - x^l_i)^2 \\
& \ge f(x^*) + \frac 14 \sum_{i=1}^{\tilde k} \tilde q_i (x^u_i - x^l_i)^2 \\
& \ge f(x^*) + \frac 14 \tilde q_1 (x_1^u -x_1^l)^2 \\
& = f(x^*) + \frac \gamma 4.
\end{align*}
The first inequality holds because $f(x^l) \ge f(x^*)$ and $f(x^u) \ge f(x^*)$, since $x^l$ and $x^u$ are integral vectors in $\tilde \P$.
In order to obtain the second inequality note that all the terms of the summation are nonnegative, thus a lower bound of the sum is given by the first term.
In the last equation, we have used 
$x_1^l = \tilde l_1$, $x_1^u = \tilde u_1$ by choice of $x^l$ and $x^u$,
and $\gamma = \tilde q_1 (\tilde u_1 - \tilde l_1)^2$ by the assumption on the index $i=1$.
\end{proof}
\color{black}
\end{knownproof}

Define the box 
\begin{align*}
\D := [x^\circ_1 - \tilde n \Delta, \ x^\circ_1 + \tilde n \Delta] \times \cdots \times [x^\circ_{\tilde k} - \tilde n \Delta, \ x^\circ_{\tilde k} + \tilde n \Delta] \subset \R^{\tilde k}.
\end{align*}

\begin{claim}
\label{claim: new}
There exist vectors $x^-$, $x^+$ in $\tilde \P \cap \Z^{\tilde n}$ such that $(x_1^-,\dots,x_{\tilde k}^-)$ and $(x_1^+,\dots,x_{\tilde k}^+)$ are in $\D$ and
\begin{align}
\label{eq: bullet prime}
x^\circ = \frac {x^- + x^+}2.
\end{align}
\end{claim}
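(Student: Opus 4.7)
The plan is to reinterpret the claim as an integer-proximity question in an auxiliary polyhedron and then apply a classical proximity theorem. Set $v := x^u - x^l$ and introduce
\[
\tilde \P' := \{z \in \R^{\tilde n} : \widetilde W z \le \tilde w - \widetilde W x^l, \ -\widetilde W z \le \tilde w - \widetilde W x^u \}.
\]
By construction $z \in \tilde \P'$ if and only if $x^l + z \in \tilde \P$ and $x^u - z \in \tilde \P$. Hence, for any integer $\bar z \in \tilde \P'$, the pair $x^- := x^l + \bar z$ and $x^+ := x^u - \bar z$ automatically lies in $\tilde \P \cap \Z^{\tilde n}$, and $x^- + x^+ = x^l + x^u = 2 x^\circ$, so $x^\circ = (x^- + x^+)/2$ as required in \eqref{eq: bullet prime}. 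Moreover, $x^-_i - x^\circ_i = \bar z_i - v_i/2$ and $x^+_i - x^\circ_i = -(\bar z_i - v_i/2)$, so the two membership conditions $(x^-_1,\dots,x^-_{\tilde k}), (x^+_1,\dots,x^+_{\tilde k}) \in \D$ will both follow once we produce $\bar z \in \tilde \P' \cap \Z^{\tilde n}$ with $|\bar z_i - v_i/2| \le \tilde n \Delta$ for $i = 1,\dots,\tilde k$.

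To find such a $\bar z$, I note that $v/2$ lies in $\tilde \P'$ by convexity and that $0$ and $v$ are integer points of $\tilde \P'$. The constraint matrix of $\tilde \P'$ is $\begin{pmatrix}\widetilde W \\ -\widetilde W\end{pmatrix}$; any of its square submatrices either contains a row and its negation (so has determinant zero) or reduces, via sign flips of some rows, to a square submatrix of $\widetilde W$, so its determinant is bounded by $\Delta$ in absolute value. I would then apply the proximity theorem of Cook, Gerards, Schrijver and Tardos in the following form: if an integer matrix has subdeterminants bounded by $\Delta$ and its associated polyhedron contains an integer point, then every point of the polyhedron is at $\ell_\infty$-distance at most (number of variables)$\cdot \Delta$ from some integer point of the polyhedron. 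This variant follows from the classical statement by taking the trivial objective $c = 0$, for which every feasible point is LP-optimal and every integer feasible point is IP-optimal. Applied to $\tilde \P'$ and the point $v/2$, it produces $\bar z \in \tilde \P' \cap \Z^{\tilde n}$ with $\|\bar z - v/2\|_\infty \le \tilde n \Delta$, which is exactly the bound needed for the first $\tilde k$ coordinates.

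Setting $x^- := x^l + \bar z$ and $x^+ := x^u - \bar z$ then finishes the proof. The writing-down of the auxiliary polyhedron $\tilde \P'$ and the subdeterminant verification for $\begin{pmatrix}\widetilde W \\ -\widetilde W\end{pmatrix}$ are straightforward. The main obstacle, and the reason this argument is delicate, is the proximity step: one needs the version of Cook--Gerards--Schrijver--Tardos that bounds the distance between an \emph{arbitrary} LP-feasible point and some integer feasible point (rather than merely between optimal LP and IP solutions), and must verify that the dimension factor in the bound is $\tilde n$, matching the definition of the box $\D$.
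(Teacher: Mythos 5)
Your proof is correct, but it reaches the claim by a genuinely different route than the paper. The paper proves the symmetrized proximity statement from scratch: it forms the cone $\T = \{x : \widetilde W_1 x \le 0,\ \widetilde W_2 x \ge 0\}$ determined by how $x^l$ and $x^\circ$ sit relative to each constraint, writes $x^l - x^\circ = \sum_{j=1}^t \alpha_j v^j$ with $t \le \tilde n$ integral generators whose entries are bounded by $\Delta$ via Cramer's rule, and sets $x^\pm := x^\circ \pm \sum_j (\alpha_j - \floor{\alpha_j}) v^j$, checking integrality and feasibility by hand from the sign conditions on $\widetilde W_1, \widetilde W_2$ --- this is essentially the standard proof of the Cook--Gerards--Schrijver--Tardos proximity theorem, adapted so that the two rounded points come out symmetric about $x^\circ$. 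You instead obtain the symmetry structurally, by encoding the pair $(x^-, x^+) = (x^l + z,\ x^u - z)$ as a single unknown $z$ in the auxiliary polyhedron $\tilde\P'$, and then invoke CGST as a black box (correctly, in its $c=0$ form, which does give proximity of an arbitrary feasible point to an integer feasible point once the integer hull is nonempty --- here $0 \in \tilde\P' \cap \Z^{\tilde n}$). The one nontrivial verification in your reduction, that the stacked matrix $\bigl(\begin{smallmatrix}\widetilde W \\ -\widetilde W\end{smallmatrix}\bigr)$ still has subdeterminants bounded by $\Delta$, is handled correctly (repeated row indices give determinant zero, otherwise sign flips reduce to a submatrix of $\widetilde W$), and the dimension factor $\tilde n$ matches the definition of $\D$. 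What each approach buys: yours is shorter and modular, at the cost of depending on the exact form of an external theorem; the paper's is self-contained, which matters here because the symmetric pair is a slightly nonstandard conclusion that one would otherwise have to extract from the proximity proof anyway (a naive application of CGST to $\tilde\P$ and $x^\circ$ yields one nearby integer point, not a pair averaging to $x^\circ$) --- your $\tilde\P'$ construction is precisely the device that repairs this, and it is a clean one.
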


\begin{cpf}
%
We first construct the vectors $x^+$ and $x^-$.
To do so, we construct a polyhedral cone 
often used to obtain proximity results for integer problems featuring separable objective functions (see, \eg \cite{GraSko90,HocSha90}).
Let $\widetilde W_1$ be the row submatrix of $\widetilde W$ containing the rows $u$ such that $u x^l < u x^\circ$, and let $\widetilde W_2$ be the row submatrix of $\widetilde W$ containing the remaining rows of $\widetilde W$, \ie the rows $u$ such that $u x^l \ge u x^\circ$.
Consider the polyhedral cone 
\begin{align*}
\T := \{ x \in \R^{\tilde n} : \widetilde W_1 x \le 0, \ \widetilde W_2 x \ge 0\}.
\end{align*} 
Let $V \subset \T$ be a finite set of integral vectors that generates $\T$. 
Since $x^l - x^\circ \in \T$, there exist $t \le \tilde n$ vectors $v^1,\dots,v^t$ in $V$, and positive scalars $\alpha_1,\dots,\alpha_t$ such that
\begin{align}
\label{eq: HS34}
x^l - x^\circ = \sum_{j=1}^t \alpha_j v^j.
\end{align}
We define the vectors $x^+$ and $x^-$ as
\begin{align}
\label{eq: HS35}
x^+ & := x^\circ + \sum_{j=1}^t (\alpha_j - \floor{\alpha_j}) v^j, \\
\label{eq: HS35sym}
x^- & := x^\circ - \sum_{j=1}^t (\alpha_j - \floor{\alpha_j}) v^j.
\end{align}
From \eqref{eq: HS35} and \eqref{eq: HS35sym} we directly obtain \eqref{eq: bullet prime}.

Cramer's rule and the integrality of the matrix $\widetilde W$ imply that every vector in $V$ 
can be chosen to have components bounded by $\Delta$ in absolute value.
Since $\alpha_j - \floor{\alpha_j} < 1$, for $j=1,\dots,t$, from \eqref{eq: HS35} and \eqref{eq: HS35sym}, one has, for every $i=1,\dots,\tilde k$,
\begin{align*}
\abs{x_i^+ - x_i^\circ} = \abs{x_i^\circ - x_i^-} = \abs{\sum_{j=1}^t (\alpha_j - \floor{\alpha_j}) v_i^j} \le \tilde n \Delta.
\end{align*}
which implies that $(x_1^-,\dots,x_{\tilde k}^-)$ and $(x_1^+,\dots,x_{\tilde k}^+)$ are in $\D$.

Next, we show that $x^+$ and $x^-$ are in $\Z^{\tilde n}$. 
Note that, from \eqref{eq: HS34} and \eqref{eq: HS35} we obtain
\begin{align}
\label{eq: HS37}
x^+ = x^l - \sum_{j=1}^t \floor{\alpha_j} v^j.
\end{align}
Since $x^l \in \Z^{\tilde n}$, $\floor{\alpha_j} \in \Z$, and the $v^j$ are integral vectors, we obtain that $x^+$ is integral.
From \eqref{eq: bullet} and \eqref{eq: bullet prime} we have $x^l - x^+ = x^- - x^u$.
Hence, from \eqref{eq: HS37}, we obtain
\begin{align}
\label{eq: HS37sym}
x^- = x^u + \sum_{j=1}^t \floor{\alpha_j} v^j.
\end{align}
Since also the vector $x^u$ is integral, we obtain $x^- \in \Z^{\tilde n}$.

Next, we show that $x^+$ is in $\tilde \P$.
Using \eqref{eq: HS35}, together with $\alpha_j - \floor{\alpha_j} \ge 0$ and $\widetilde W_1 v^j \le 0$, for $j = 1,\dots,t$, since $v^j \in \T$, we derive
\begin{align*}
\widetilde W_1 x^+ = \widetilde W_1 x^\circ + \sum_{j=1}^t (\alpha_j - \floor{\alpha_j}) \widetilde W_1 v^j \le \widetilde W_1 x^\circ.
\end{align*}
Using \eqref{eq: HS37}, $\floor{\alpha_j} \ge 0$, and $\widetilde W_2 v^j \ge 0$, for $j = 1,\dots,t$, we have
\begin{align*}
\widetilde W_2 x^+ = \widetilde W_2 x^l - \sum_{j=1}^t \floor{\alpha_j} \widetilde W_2 v^j \le \widetilde W_2 x^l.
\end{align*}
Since both vectors $x^\circ$ and $x^l$ satisfy $\widetilde W x \le \tilde w$, we obtain $\widetilde W x^+ \le \tilde w$, thus $x^+ \in \tilde \P$. 



To show $x^- \in \tilde \P$ we use \eqref{eq: HS37sym} and \eqref{eq: HS35sym} to obtain
\begin{align*}
\widetilde W_1 x^- & = \widetilde W_1 x^u + \sum_{j=1}^t \floor{\alpha_j} \widetilde W_1 v^j \le \widetilde W_1 x^u, \\
\widetilde W_2 x^- & = \widetilde W_2 x^\circ - \sum_{j=1}^t (\alpha_j - \floor{\alpha_j}) \widetilde W_2 v^j \le \widetilde W_2 x^\circ.
\end{align*}
Since vectors $x^u$ and $x^\circ$ satisfy $\widetilde W x \le \tilde w$, we have shown that $x^- \in \tilde \P$. 
\end{cpf}

\begin{claim}
\label{claim: yup}
There exists a vector $x^\Yup$ in $\tilde \P \cap \Z^{\tilde n}$ such that 
\begin{align}
\label{eq: yup}
f(x^\circ) - f(x^\Yup) \le \frac{\gamma \tilde k (\tilde n \Delta)^2}{\tilde g^2}.
\end{align}
\end{claim}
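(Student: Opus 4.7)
The plan is to combine Claim~\ref{claim: new} with the separable quadratic identity
\[
f(x^\circ) - \frac{f(x^-)+f(x^+)}{2} = \sum_{i=1}^{\tilde k} \frac{\tilde q_i (x_i^+ - x_i^-)^2}{4},
\]
which is obtained directly from the facts that the linear part $\tilde h^\top x$ averages exactly at the midpoint, and that for each coordinate $i$, $\tfrac{1}{2}((x_i^-)^2+(x_i^+)^2) - (x_i^\circ)^2 = \tfrac{1}{4}(x_i^+-x_i^-)^2$. I would state and prove this identity at the start.

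Next I would \emph{define} $x^\Yup$ to be whichever of $x^-$ and $x^+$ has the larger objective value, i.e., $f(x^\Yup) := \max\{f(x^-), f(x^+)\}$. Since $f(x^\Yup)$ is at least the average, the identity immediately gives
\[
f(x^\circ) - f(x^\Yup) \;\le\; f(x^\circ) - \frac{f(x^-)+f(x^+)}{2} \;=\; \sum_{i=1}^{\tilde k} \frac{\tilde q_i (x_i^+ - x_i^-)^2}{4}.
\]
Feasibility ($x^\Yup \in \tilde\P \cap \Z^{\tilde n}$) is inherited directly from Claim~\ref{claim: new}.

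The remaining work is to bound the right-hand side by $\gamma \tilde k (\tilde n \Delta)^2 / \tilde g^2$. For the coordinatewise distance, Claim~\ref{claim: new} places both $(x_1^-,\dots,x_{\tilde k}^-)$ and $(x_1^+,\dots,x_{\tilde k}^+)$ inside the box $\D$, which has side length $2\tilde n \Delta$ centered at $x^\circ$; hence $(x_i^+ - x_i^-)^2 \le 4(\tilde n \Delta)^2$ for each $i=1,\dots,\tilde k$. For the coefficients $\tilde q_i$, I would invoke the standing assumption in this part of the argument that $\tilde u_i - \tilde l_i \ge \tilde g$ for every $i$ (the very reason the algorithm did not decompose \eqref{prob: IQPtilde} further in Step~\ref{step: dec}). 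Combined with $\gamma = \max_i \tilde q_i (\tilde u_i - \tilde l_i)^2$, this yields $\tilde q_i \le \gamma/(\tilde u_i - \tilde l_i)^2 \le \gamma/\tilde g^2$.

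Substituting these two bounds produces
\[
\sum_{i=1}^{\tilde k} \frac{\tilde q_i (x_i^+ - x_i^-)^2}{4} \;\le\; \sum_{i=1}^{\tilde k} \frac{\gamma}{\tilde g^2} \cdot (\tilde n \Delta)^2 \;=\; \frac{\gamma \tilde k (\tilde n \Delta)^2}{\tilde g^2},
\]
which is exactly the bound \eqref{eq: yup}. The proof is essentially mechanical once Claim~\ref{claim: new} is in hand; the only ``choice'' is picking $x^\Yup$ as the better of the two endpoints, and the only nonobvious input is that the non-decomposition condition $\tilde u_i - \tilde l_i \ge \tilde g$ is precisely what makes the coefficient bound $\tilde q_i \le \gamma/\tilde g^2$ match the $1/\tilde g^2$ factor appearing in the target inequality. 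I expect no real obstacle; the quadratic identity plus the proximity-type bound from Claim~\ref{claim: new} deliver the claim directly.
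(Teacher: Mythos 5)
Your proof is correct, but it reaches \eqref{eq: yup} by a somewhat different route than the paper. The paper introduces the affine function $\lambda$ interpolating $q$ at the vertices of $\D$, invokes Lemma~\ref{lem: underestimator} on $\D$ to get $\lambda(x) \le q(x) \le \lambda(x) + (\tilde n\Delta)^2\sum_i \tilde q_i$, and then chooses $x^\Yup$ as whichever of $x^-,x^+$ maximizes the \emph{linear} function $\lambda(x)+\tilde h^\top x$ (whose value at the midpoint $x^\circ$ equals the average of its endpoint values). You instead bypass the underestimator entirely and use the exact separable-quadratic midpoint identity
\begin{align*}
f(x^\circ) - \tfrac12\bigl(f(x^-)+f(x^+)\bigr) = \tfrac14\sum_{i=1}^{\tilde k}\tilde q_i (x_i^+-x_i^-)^2,
\end{align*}
choosing $x^\Yup$ as the endpoint with larger $f$-value; this is the same identity the paper uses (in the opposite direction) to derive the lower bound \eqref{eq: claim bad}. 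Both arguments then finish identically: Claim~\ref{claim: new} gives $\abs{x_i^+-x_i^-}\le 2\tilde n\Delta$, and the non-decomposition condition $\tilde u_i-\tilde l_i\ge\tilde g$ combined with $\tilde q_i(\tilde u_i-\tilde l_i)^2\le\gamma$ gives $\tilde q_i\le\gamma/\tilde g^2$, yielding the same constant. Your version is shorter and self-contained; the paper's version is structurally parallel to its derivation of \eqref{eq: claim good} and reuses Lemma~\ref{lem: underestimator}, which is the form that generalizes to the TU case (Claim~\ref{claim: yup TU}), where $x^\circ$ is a convex combination of possibly many integral points rather than a midpoint of two, and your exact midpoint identity would no longer apply.
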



\begin{cpf}
We define the affine function $\lambda : \R^{\tilde k} \to \R$ that attains the same values as $q(x)$ at the vectors corresponding to the vertices of $\D$:
\begin{align*}
\lambda(x) := \sum_{i=1}^{\tilde k} \pare{-2 \tilde q_i x^\circ_i x_i + \tilde q_i ({x^\circ_i}^2 - (\tilde n \Delta)^2)}.
\end{align*}

Lemma~\ref{lem: underestimator} implies that
for every $x \in \R^{\tilde n}$ with $(x_1,\dots,x_{\tilde k}) \in \D$ we have
\begin{align*}
\lambda(x) \le q(x) 
\le \lambda(x) + (\tilde n \Delta)^2 \sum_{i=1}^{\tilde k} \tilde q_i.
\end{align*}
Using the fact that for each $i = 1,\dots, \tilde k$, we have $1 \le (\tilde u_i- \tilde l_i)^2/\tilde g^2$,
and $\tilde q_i (\tilde u_i-\tilde l_i)^2 \le \gamma$, we derive that,
for every $x \in \R^{\tilde n}$ with $(x_1,\dots,x_{\tilde k}) \in \D$,
\begin{align}
\label{eq: claim sub 2}
\lambda(x) \le q(x)
\le \lambda(x) + \frac{\gamma \tilde k (\tilde n \Delta)^2}{\tilde g^2}.
\end{align}

Consider the linear function $\lambda(x) + \tilde h^\top x$.
As a consequence of Claim~\ref{claim: new}, the vector $x^\circ$ is a convex combination of the vectors $x^-$ and $x^+$.
Hence one of the vectors $x^-$ and $x^+$, that we denote by $x^\Yup$, satisfies 
\begin{align}
\label{eq: claim blackwhite}
\lambda(x^\Yup) + \tilde h^\top x^\Yup \ge \lambda(x^\circ) + \tilde h^\top x^\circ.
\end{align}
In view of Claim~\ref{claim: new}, the vector $x^\Yup$ is in $\tilde \P \cap \Z^{\tilde n}$ and $(x_1^\Yup,\dots,x_{\tilde k}^\Yup) \in \D$.

To complete the proof of the claim, we only need to show that \eqref{eq: yup} holds.
We have
\begin{align*}
f(x^\circ) & \le \lambda(x^\circ) + \tilde h^\top x^\circ + \frac{\gamma \tilde k (\tilde n \Delta)^2}{\tilde g^2} \\
& \le \lambda(x^\Yup) + \tilde h^\top x^\Yup + \frac{\gamma \tilde k (\tilde n \Delta)^2}{\tilde g^2} \\
& \le f(x^\Yup) + \frac{\gamma \tilde k (\tilde n \Delta)^2}{\tilde g^2}.
\end{align*}
The first inequality follows because, from 
\eqref{eq: claim sub 2},
we have $q (x^\circ) \le \lambda (x^\circ) + \gamma \tilde k (\tilde n \Delta)^2/ \tilde g^2$.
The second inequality holds as a consequence of \eqref{eq: claim blackwhite}.
In the third inequality we use the fact that $\lambda(x^\Yup) \le q(x^\Yup)$ from 
\eqref{eq: claim sub 2}.
Hence $f(x^\circ) - f(x^\Yup) \le \gamma \tilde k (\tilde n \Delta)^2 / \tilde g^2$.
\end{cpf}


Combining \eqref{eq: claim bad} with Claim~\ref{claim: yup}, we derive a lower bound on $f(x^\Yup) - f(x^*)$:
\begin{align}
\label{eq: circle}
\begin{split}
f(x^\Yup) - f(x^*) & = \big(f(x^\Yup) - f(x^\circ)\big) + \big(f(x^\circ) - f(x^*)\big) \\
& \ge \frac \gamma 4 - \frac{\gamma \tilde k (\tilde n \Delta)^2}{\tilde g^2}   = \frac{\gamma (\tilde g^2 - \tilde k (2 \tilde n \Delta)^2)}{4 \tilde g^2}.
\end{split}
\end{align}

We are now ready to show that $x^\diamond$ is an $\epsilon$-approximate solution to 
\eqref{prob: IQPtilde}.
We have 
\begin{align*}
\frac{f(x^\diamond) - f(x^*)}{f(x^\Yup) - f(x^*)} 
& \le \frac{\cancel{\gamma} \tilde k}{\cancel{4 \tilde g^2}} \cdot \frac{\cancel{4 \tilde g^2}}{\cancel{\gamma} (\tilde g^2 - \tilde k (2 \tilde n \Delta)^2)} 
=  \frac{\tilde k}{\tilde g^2 - \tilde k (2 \tilde n \Delta)^2} 
\le \epsilon.\
\end{align*}
In the first inequality we use 
\eqref{eq: claim good} 
and \eqref{eq: circle}.
The last inequality holds because by definition of $\tilde g$ we have $\tilde g^2 \ge \tilde k ((2 \tilde n \Delta)^2 + 1/\epsilon)$ which is equivalent to $\tilde k \le \epsilon(\tilde g^2 - \tilde k (2 \tilde n \Delta)^2)$ since $\epsilon > 0$ and to $\tilde k / (\tilde g^2 - \tilde k (2 \tilde n \Delta)^2) \le \epsilon$ since $\tilde g^2 \ge \tilde k (2 \tilde n \Delta)^2$.
This concludes the proof of Theorem~\ref{th: delta}.

\section{The totally unimodular case}

We now consider problem \eqref{prob: IQP} with a TU constraint matrix, thus we fix $\Delta = 1$.
In this section we prove Theorem~\ref{th: TU}.
The proof is very similar to the proof of Theorem~\ref{th: delta}, thus we only describe the differences.

The algorithm is obtained from the one detailed in Section~\ref{sec: algorithm description} by making the following changes: 
(i) The integrality constraint is dropped from all the solved ILPs: \eqref{pr: bounds}, \eqref{pr: aux}, \eqref{prob: IQPtilde} when $\tilde k = 0$, \eqref{eq: boundstilde}, and \eqref{prob: ILP on box};
(ii) The definition of $\tilde g$ in Step~\ref{step: dec}
is replaced with 
$\tilde g := \left\lceil\sqrt{\tilde k \left(1 + 1/\epsilon \right)}\right\rceil$.

The new algorithm solves at most 
\begin{align*}
\left(3 + \left\lceil\sqrt{k \left(1 + \frac{1}{\epsilon}\right)}\right\rceil \right)^k
\end{align*}
LPs of size polynomial in the size of 
\eqref{prob: IQP}. 
Moreover, each LP has integral data,
at most $n$ variables, 
at most $m$ linear inequalities and possibly additional variable bounds,
and a TU constraint matrix.
To see this, one just need to go through the proof of Proposition~\ref{prop: operation count} and simply replace the definition of $g$ with $g := \left\lceil\sqrt{k \left(1 + 1/\epsilon \right)}\right\rceil$.

To prove the correctness of the new algorithm, we need to make two modifications to the proof of correctness given in Section \ref{sec: correctness}.
The first modification addresses the change (i) in the description of the algorithm.
The reason why we can drop the integrality constraints is that the original ILPs \eqref{pr: bounds}, \eqref{pr: aux}, \eqref{prob: IQPtilde} when $\tilde k = 0$, \eqref{eq: boundstilde}, and \eqref{prob: ILP on box} all have a TU constraint matrix and integral data, thus they are equivalent to the obtained LPs.

The second modification lies in the derivation of a better lower bound on the value $f_{\max} - f(x^*)$.
To obtain this improved bound we define the box $\D$ differently:
\begin{align*}
\D := [\floor{x^\circ_1},\ceil{x^\circ_1}] \times \cdots \times [\floor{x^\circ_{\tilde k}},\ceil{x^\circ_{\tilde k}}] \subset \R^{\tilde k}.
\end{align*}
With this new definition, Claim~\ref{claim: new} is replaced by the following claim.

\begin{claim}
\label{claim: new TU}
The vector $x^\circ$ lies in the convex hull of the integral vectors in the polyhedron 
\begin{align*}
\P^\Yup := \{ x \in \R^{\tilde n} : \widetilde Wx \le \tilde w, \ \floor{x^\circ_i} \le x_i \le \ceil{x^\circ_i}, i=1,\dots,\tilde k\}.
\end{align*}
\end{claim}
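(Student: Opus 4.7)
The plan is to observe that $\P^\Yup$ is an integral polyhedron and that $x^\circ$ lies in it; the claim then follows immediately because every integral polyhedron equals the convex hull of its integral points.

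First I would verify that $x^\circ \in \P^\Yup$. Since $\widetilde W x^l \le \tilde w$ and $\widetilde W x^u \le \tilde w$, taking the average gives $\widetilde W x^\circ \le \tilde w$, so $x^\circ$ satisfies the main system. The bound constraints $\floor{x^\circ_i} \le x_i \le \ceil{x^\circ_i}$ are trivially satisfied at $x = x^\circ$ by the definition of floor and ceiling. In particular this also shows $\P^\Yup$ is nonempty.

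Next I would argue that $\P^\Yup$ is integral. Its constraint matrix can be written (after the usual splitting of the bound constraints on the first $\tilde k$ coordinates) as the vertical stacking of $\widetilde W$ with selected rows of $I$ and $-I$. Since $\widetilde W$ is a column submatrix of the TU matrix $W$, it is itself TU, and appending rows of $\pm I$ preserves total unimodularity (a standard fact: any square submatrix of such an augmented matrix either is a square submatrix of $\widetilde W$, or contains at least one unit row/column, so expansion along that row/column reduces the determinant computation to a smaller minor of $\widetilde W$). The right-hand side vector is integral as well: $\tilde w$ is integral by assumption, and $\floor{x^\circ_i}, \ceil{x^\circ_i}$ are integers. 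By the Hoffman--Kruskal theorem, $\P^\Yup$ is an integral polyhedron, hence equals the convex hull of its integral points.

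Combining the two observations, $x^\circ \in \P^\Yup = \conv(\P^\Yup \cap \Z^{\tilde n})$, which is exactly the statement of the claim. The argument is essentially routine; the only point to be careful about is checking that each system $\widetilde W$ encountered during the algorithm retains the TU property, but this is automatic since $\widetilde W$ is obtained from $W$ by selecting columns, and TU is preserved under taking submatrices.
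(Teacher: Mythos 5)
Your proposal is correct and follows essentially the same route as the paper: verify that $x^\circ \in \P^\Yup$, note that the constraint matrix is TU and the right-hand side integral, and invoke Hoffman--Kruskal to conclude that $\P^\Yup$ equals the convex hull of its integral points. You spell out a couple of details the paper leaves implicit (the preservation of total unimodularity under appending rows of $\pm I$, and the integrality of the floor/ceiling bounds), but the argument is the same.
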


\begin{cpf}
Note that $x^\circ \in \P^\Yup$.
In fact, the vector $x^\circ$ is in $\tilde \P$, thus it satisfies $\widetilde Wx \le \tilde w$, and it trivially satisfies the constraints $\floor{x^\circ_i} \le x_i \le \ceil{x^\circ_i}$ for all $i=1,\dots,\tilde k$.
Moreover, the polyhedron $\P^\Yup$ is integral, since the constraint matrix $\widetilde W$ is TU and $\tilde w$ is integral. 
It follows that the vector $x^\circ$ can be written as a convex combination of integral vectors in $\P^\Yup$. 
\end{cpf}

The next claim takes place of Claim~\ref{claim: yup}.

\begin{claim}
\label{claim: yup TU}
There exists a vector $x^\Yup$ in $\tilde \P \cap \Z^{\tilde n}$ such that 
\begin{align}
\label{eq: yup TU}
f(x^\circ) - f(x^\Yup) \le \frac{\gamma \tilde k}{4 \tilde g^2}.
\end{align}
\end{claim}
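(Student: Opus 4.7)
The plan is to mirror the proof of Claim~\ref{claim: yup}, but take advantage of the fact that the new box $\D$ has sides of length at most $1$ (instead of $2\tilde n \Delta$), which will directly produce the sharper bound $\gamma \tilde k/(4\tilde g^2)$. First I would define an affine underestimator $\lambda : \R^{\tilde k} \to \R$ that agrees with $q$ at the vertices of $\D$, namely
\begin{align*}
\lambda(x) := \sum_{i=1}^{\tilde k} \bigl(-\tilde q_i (\floor{x^\circ_i} + \ceil{x^\circ_i}) x_i + \tilde q_i \floor{x^\circ_i} \ceil{x^\circ_i} \bigr).
\end{align*}
Applying Lemma~\ref{lem: underestimator} on the box $\D$, and using that $\ceil{x^\circ_i} - \floor{x^\circ_i} \le 1$, gives, for every $x \in \R^{\tilde n}$ with $(x_1,\dots,x_{\tilde k}) \in \D$,
\begin{align*}
\lambda(x) \le q(x) \le \lambda(x) + \tfrac14 \sum_{i=1}^{\tilde k} \tilde q_i.
\end{align*}
The key observation is then that the Step~\ref{step: dec} guarantee $\tilde u_i - \tilde l_i \ge \tilde g$, combined with $\tilde q_i(\tilde u_i - \tilde l_i)^2 \le \gamma$, yields $\tilde q_i \le \gamma/\tilde g^2$, so that $\sum_{i=1}^{\tilde k} \tilde q_i \le \gamma \tilde k / \tilde g^2$, and hence $q(x) - \lambda(x) \le \gamma \tilde k/(4 \tilde g^2)$.

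Next I would invoke Claim~\ref{claim: new TU} to write $x^\circ$ as a convex combination of integral vectors in $\P^\Yup$. By the definition of $\P^\Yup$, every such integer vector lies in $\tilde \P \cap \Z^{\tilde n}$ and has its first $\tilde k$ coordinates in $\D$. Since $\lambda(x) + \tilde h^\top x$ is linear, at least one of these integer vectors, call it $x^\Yup$, satisfies
\begin{align*}
\lambda(x^\Yup) + \tilde h^\top x^\Yup \ge \lambda(x^\circ) + \tilde h^\top x^\circ.
\end{align*}

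Finally I would chain the three inequalities exactly as in Claim~\ref{claim: yup}: bound $f(x^\circ)$ above by $\lambda(x^\circ) + \tilde h^\top x^\circ + \gamma \tilde k/(4\tilde g^2)$ via the upper sandwich, replace $\lambda(x^\circ) + \tilde h^\top x^\circ$ by $\lambda(x^\Yup) + \tilde h^\top x^\Yup$ via the averaging inequality above, and use the lower sandwich $\lambda(x^\Yup) \le q(x^\Yup)$ (valid since $(x_1^\Yup,\dots,x_{\tilde k}^\Yup) \in \D$) to obtain $f(x^\circ) - f(x^\Yup) \le \gamma \tilde k /(4\tilde g^2)$, which is \eqref{eq: yup TU}.

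No substantive obstacle remains: the TU-specific work has already been done in Claim~\ref{claim: new TU}, where integrality of $\P^\Yup$ (from total unimodularity of $\widetilde W$ and integrality of $\tilde w$) replaces the cone-based proximity argument of Claim~\ref{claim: new} and permits the use of a unit box. The only point that requires care is making sure $\tilde q_i \le \gamma/\tilde g^2$ really holds, which is why the Step~\ref{step: dec} guarantee $\tilde u_i - \tilde l_i \ge \tilde g$ is essential for the argument to go through at this tighter scale.
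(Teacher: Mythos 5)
Your proposal is correct and follows essentially the same route as the paper: the same affine function $\lambda$ interpolating $q$ at the vertices of the unit box $\D$, the same use of $\tilde u_i - \tilde l_i \ge \tilde g$ to get $\tilde q_i \le \gamma/\tilde g^2$ and hence the sandwich $\lambda(x) \le q(x) \le \lambda(x) + \gamma\tilde k/(4\tilde g^2)$, and the same selection of $x^\Yup$ from the convex combination provided by Claim~\ref{claim: new TU} followed by the three-inequality chain. No gaps.
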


\begin{cpf}
We begin by following the same steps performed in the proof of Claim~\ref{claim: yup}, but starting with the new affine function $\lambda : \R^{\tilde k} \to \R$ attaining the same values as $q(x)$ at the vectors corresponding to the vertices of the new box $\D$:
\begin{align*}
\lambda(x) := \sum_{i=1}^{\tilde k} \left( -\tilde q_i(\floor{x^\circ_i} + \ceil{x^\circ_i}) x_i + \tilde q_i \floor{x^\circ_i} \ceil{x^\circ_i} \right).
\end{align*}

With the new definition of $\lambda$, instead of the relation \eqref{eq: claim sub 2}, we derive that,
for every $x \in \R^{\tilde n}$ with $(x_1,\dots,x_{\tilde k}) \in \D$,
\begin{align}
\label{eq: claim sub 2 TU}
\lambda(x) \le q(x)
\le \lambda(x) + \frac{\gamma \tilde k}{4 \tilde g^2}.
\end{align}

\begin{knownproof}
The same standard argument that we used to obtain \eqref{eq: lemma} implies that
for every $x \in \R^{\tilde n}$ with $(x_1,\dots,x_{\tilde k}) \in \D$ we have
\begin{align*}
\lambda(x) \le q(x) 
\le \lambda(x) + \frac 14 \sum_{i=1}^{\tilde k} \tilde q_i.
\end{align*}
Using the fact that for each $i = 1,\dots, \tilde k$, we have $1 \le (\tilde u_i- \tilde l_i)^2/\tilde g^2$,
and $\tilde q_i (\tilde u_i-\tilde l_i)^2 \le \gamma$, we derive that,
for every $x \in \R^{\tilde n}$ with $(x_1,\dots,x_{\tilde k}) \in \D$,
\begin{align}
\label{eq: claim sub 2 TU}
\lambda(x) \le q(x)
\le \lambda(x) + \frac{\gamma \tilde k}{4 \tilde g^2}.
\end{align}
\end{knownproof}

Consider the linear function $\lambda(x) + \tilde h^\top x$.
In view of Claim~\ref{claim: new TU}, the vector $x^\circ$ lies in the convex hull of the integral vectors in the polyhedron $\P^\Yup$.
Hence there exists an integral vector in $\P^\Yup$, that we denote by $x^\Yup$, which satisfies 
\begin{align}
\label{eq: claim blackwhite TU}
\lambda(x^\Yup) + \tilde h^\top x^\Yup \ge \lambda(x^\circ) + \tilde h^\top x^\circ.
\end{align}
In particular, since $x^\Yup \in \P^\Yup$, we have that $(x_1^\Yup,\dots,x_{\tilde k}^\Yup) \in \D$.

To complete the proof of the claim, one can show that \eqref{eq: yup TU} holds by following the last paragraph of the proof of Claim~\ref{claim: yup TU}, using relations \eqref{eq: claim sub 2 TU} and \eqref{eq: claim blackwhite TU} instead of relations \eqref{eq: claim sub 2} and \eqref{eq: claim blackwhite}.
\begin{knownproof}
To complete the proof of the claim, we only need to show that \eqref{eq: yup TU} holds.
We have
\begin{align*}
f(x^\circ) & \le \lambda(x^\circ) + \tilde h^\top x^\circ + \frac{\gamma \tilde k}{4 \tilde g^2} \\
& \le \lambda(x^\Yup) + \tilde h^\top x^\Yup + \frac{\gamma \tilde k}{4 \tilde g^2} \\
& \le f(x^\Yup) + \frac{\gamma \tilde k}{4 \tilde g^2}.
\end{align*}
The first inequality follows because from 
\eqref{eq: claim sub 2 TU}
we have $q (x^\circ) \le \lambda (x^\circ) + \gamma \tilde k / (4\tilde g^2)$.
The second inequality holds as a consequence of \eqref{eq: claim blackwhite TU}.
In the third inequality we use the fact that $\lambda(x^\Yup) \le q(x^\Yup)$ from 
\eqref{eq: claim sub 2 TU}.
Hence $f(x^\circ) - f(x^\Yup) \le \gamma \tilde k / (4 \tilde g^2)$.
\end{knownproof}
\end{cpf}

Combining \eqref{eq: claim bad} with Claim~\ref{claim: yup TU}, we derive the improved lower bound on $f(x^\Yup) - f(x^*)$:
\begin{align}
\label{eq: circle TU}
\begin{split}
f(x^\Yup) - f(x^*) & = \big(f(x^\Yup) - f(x^\circ)\big) + \big(f(x^\circ) - f(x^*)\big) \\
& \ge \frac \gamma 4 - \frac{\gamma \tilde k}{4 \tilde g^2}   = \frac{\gamma (\tilde g^2 - \tilde k)}{4 \tilde g^2}.
\end{split}
\end{align}

We can now show that $x^\diamond$ is an $\epsilon$-approximate solution to 
\eqref{prob: IQPtilde}:
\begin{align*}
\frac{f(x^\diamond) - f(x^*)}{f(x^\Yup) - f(x^*)} 
& \le \frac{\cancel{\gamma} \tilde k}{\cancel{4 \tilde g^2}} \cdot \frac{\cancel{4 \tilde g^2}}{\cancel{\gamma} (\tilde g^2 - \tilde k)} 
=  \frac{\tilde k}{\tilde g^2 - \tilde k} 
\le \epsilon.\
\end{align*}
In the first inequality we use 
\eqref{eq: claim good} 
and \eqref{eq: circle TU}.
The last inequality holds because by definition of $\tilde g$ we have $\tilde g^2 \ge \tilde k (1 + 1/\epsilon)$ which is equivalent to $\tilde k \le \epsilon(\tilde g^2 - \tilde k)$ since $\epsilon > 0$ and to $\tilde k / (\tilde g^2 - \tilde k) \le \epsilon$ since $\tilde g^2 \ge \tilde k$.
This concludes the proof of Theorem~\ref{th: TU}.

\ifthenelse {\boolean{SIOPT}}
{
\bibliographystyle{siamplain}
}
{
\bibliographystyle{plain}
}

\bibliography{biblio}

\end{document}